\documentclass[12pt]{amsart}
\usepackage{amssymb,amscd}
\usepackage{verbatim}

\usepackage{xcolor}

\usepackage{amsmath,amssymb,graphicx,mathrsfs}   
\usepackage{enumerate}
\usepackage[colorlinks=true,allcolors = blue]{hyperref} 

\usepackage{tikz}
\usetikzlibrary{matrix}

\usepackage[all]{xy}

\usepackage[title]{appendix}


\usepackage{amssymb,amsfonts,amsthm,amsmath,calligra}
\usepackage{slashed}
\usepackage{yfonts}
\usepackage{mathrsfs,pifont}
\usepackage{float}

\usepackage[all]{xy}

\usepackage{tikz}

\usepackage{graphicx}
\usepackage{xcolor}

\usepackage{amssymb,amsfonts,amsthm,amsmath}
\usepackage[all]{xy}
\usepackage{slashed}
\usepackage{yfonts}
\usepackage{mathrsfs,pifont}


\textwidth 6.5truein
\textheight 8.67truein
\oddsidemargin 0truein
\evensidemargin 0truein
\topmargin 0truein

\let\frak\mathfrak

\def\>{\relax\ifmmode\mskip.666667\thinmuskip\relax\else\kern.111111em\fi}
\def\<{\relax\ifmmode\mskip-.333333\thinmuskip\relax\else\kern-.0555556em\fi}
\def\vsk#1>{\vskip#1\baselineskip}
\def\vv#1>{\vadjust{\vsk#1>}\ignorespaces}
\def\vvn#1>{\vadjust{\nobreak\vsk#1>\nobreak}\ignorespaces}

  \let\ssize\scriptstyle
\let\sssize\scriptscriptstyle

\let\Medskip\medskip
\def\medskip{\par\Medskip}
\let\Bigskip\bigskip
\def\bigskip{\par\Bigskip}

\let\Maketitle\maketitle
\def\maketitle{\Maketitle\thispagestyle{empty}\let\maketitle\empty}

\newtheorem{thm}{Theorem}[section]
\newtheorem{cor}[thm]{Corollary}
\newtheorem{lem}[thm]{Lemma}
\newtheorem{prop}[thm]{Proposition}

\newtheorem{defn}[thm]{Definition}

\theoremstyle{definition}                                  
\numberwithin{equation}{section}

\theoremstyle{definition}
\newtheorem*{rem}{Remark}
\newtheorem*{example}{Example}

\let\mc\mathcal
\let\nc\newcommand

\let\la\lambda

\let\phi\varphi

\let\der\partial

\let\geq\geqslant

\let\leq\leqslant

\let\on\operatorname
\let\bi\bibitem
\let\bs\boldsymbol

\def\C{{\mathbb C}}
\def\Z{{\mathbb Z}}

\def\F{{\mathbb F}}

\def\+#1{^{\{#1\}}}

\def\diag{\on{diag}}

\def\beq{\begin{equation}}
\def\eeq{\end{equation}}
\def\be{\begin{equation*}}
\def\ee{\end{equation*}}

\nc{\bea}{\begin{eqnarray*}}
\nc{\eea}{\end{eqnarray*}}
\nc{\bean}{\begin{eqnarray}}
\nc{\eean}{\end{eqnarray}}

\nc{\Il}{{\mc I_{\bs\la}}}
\nc{\bla}{{\bs\la}}
\nc{\Fla}{\F_\bla}
\nc{\tfl}{{T^*\Fla}}
\nc{\GL}{{GL_n(\C)}}
\nc{\GLC}{{GL_n(\C)\times\C^*}}

\let\sd s 

\def\ddk_#1{\kk_{#1}\<\>\frac\der{\der\<\>\kk_{#1}}}

\def\bul{\mathbin{\raise.2ex\hbox{$\sssize\bullet$}}}
\def\intt{\mathchoice
{\mathop{\raise.2ex\rlap{$\,\,\ssize\backslash$}{\intop}}\nolimits}
{\mathop{\raise.3ex\rlap{$\,\sssize\backslash$}{\intop}}\nolimits}
{\mathop{\raise.1ex\rlap{$\sssize\>\backslash$}{\intop}}\nolimits}
{\mathop{\rlap{$\sssize\<\>\backslash$}{\intop}}\nolimits}}

\let\kk q 
\let\cc c

\let\Ko K

\def\GZ/{Gelfand-Zetlin}
\def\KZ/{{\slshape KZ\/}}
\def\qKZ/{{\slshape qKZ\/}}
\def\XXX/{{\slshape XXX\/}}

\def\Fr{\mathsf{U}}

\def\aa{\bs{a}}
\def\uu{\bs{u}}
\def\zz{{\bs z}}

\nc{\A}{{\mc A}}

\def\uu{{\bs u}}

\nc{\hsl}{\widehat{{\frak{sl}_2}}}

\nc{\BC}{{ \mathbb C}}
\nc{\lra}{\longrightarrow}
\nc{\CO}{{\mathcal{O}}}
\nc{\BZ}{{ \mathbb Z}}
\nc{\hfn}{\hat{\frak{n}}}
\nc\Zs{{\Z/p^s\Z}}
\nc\Zo{{\Zs[z]^0}}
\nc\gr{{\on{gr}}}

\nc\fD{{\frak D}}

\newcommand{\matC}{\mathbb{C}}
\newcommand{\matN}{\mathbb{N}}

\newcommand{\matQ}{\mathbb{Q}}
\newcommand{\matP}{\mathbb{P}}
\newcommand{\matZ}{\mathbb{Z}}

\newcommand{\Mop}{\mathbf{M}}

\newcommand{\cL}{\mathcal{L}}

\usepackage{tikz}

\usetikzlibrary{decorations}
\usetikzlibrary{decorations.pathmorphing}
\usetikzlibrary{calc}


\newsavebox{\hweights}
\savebox{\hweights}{%
\begin{tikzpicture}[baseline= {($(current bounding box.base)-(0pt,-30pt)$)}]
\begin{scope}
\draw[line width=0.35mm] (0,0)-- (5,5) ;

\draw[line width=0.35mm] (-1,1)-- (4,6) ;

\draw[line width=0.35mm] (-2,2)-- (3,7) ;

\draw[line width=0.35mm] (-3,3)-- (2,8) ;

\draw[line width=0.35mm] (0,0)-- (-3,3) ;
\draw[line width=0.35mm] (1,1)-- (-2,4) ;
\draw[line width=0.35mm] (2,2)-- (-1,5) ;
\draw[line width=0.35mm] (3,3)-- (0,6) ;
\draw[line width=0.35mm] (4,4)-- (1,7) ;
\draw[line width=0.35mm] (5,5)-- (2,8) ;
\node at (0,1) { $1$};
\node at (-1,2) { $2$}; \node at (1,2) { $2$};
\node at (-2,3) { $3$};\node at (0,3) { $3$};
\node at (2,3) { $3$};
\node at (-1,4) { $4$};
\node at (1,4) { $4$};
\node at (3,4) { $4$};
\node at (0,5) { $5$};
\node at (2,5) { $5$};
\node at (4,5) { $5$};
\node at (1,6) { $6$};
\node at (3,6) { $6$};
\node at (2,7) { $7$};
\end{scope}
\end{tikzpicture}}

\begin{document}

\hrule width0pt
\vsk->

\title[Frobenius intertwiners for $q$-difference equations]
{Frobenius intertwiners for $q$-difference equations}

\author
[Andrey Smirnov ]
{Andrey Smirnov$^{\star}$ }

\maketitle

\begin{center}
{ Department of Mathematics, University
of North Carolina at Chapel Hill\\ Chapel Hill, NC 27599-3250, USA\/}

\end{center}

\vsk>
{\leftskip3pc \rightskip\leftskip \parindent0pt \Small
{\it Key words\/}:  Frobenius structures, $q$-hypergeometric functions, $p$-adic $q$-Gamma function, Nakajima varieties, enumerative geometry

\vsk.6>
{\it 2020 Mathematics Subject Classification\/}: 14G33 (11D79, 32G34, 33C05, 33E30)
\par}


{\let\thefootnote\relax
\footnotetext{\vsk-.8>\noindent
$^\star\<${\sl E\>-mail}:\enspace asmirnov@email.unc.edu
}

\begin{abstract}
We consider a class of $q$-hypergeometric equations describing the quantum difference equation for  the cotangent bundles over projective spaces $X=T^{*}\matP^{n-1}$ . We show that over $\matQ_p$ these equations are equipped with the Frobenius action $(q,z)\to (q^p,z^p)$. We obtain an explicit formula for the constant term of the Frobenius intertwiner in terms of   the $p$-adic $q$-gamma function of Koblitz. In the limit $q\to 1$ we arrive at the Frobenius structures for the $p$-adic hypergeometric and Bessel differential equations  studied by Dwork. In particular, we find closed formulas for $p$-adic constants appearing in works of Dwork and Sperber in terms of $p$-adic zeta functions.
 \end{abstract}


\setcounter{footnote}{0}

\section{Introduction} 
\subsection{$q$-difference equations and enumerative geometry} 
The quantum K-theory of quiver varieties \cite{Oko17} is governed by the quantum difference equations (QDE) \cite{OS22}. It is expected that over $\mathbb{Q}_p$ these equations are equipped with a symmetry which acts on the arguments of the equations by $z\to z^p$. In the limit $q\to 1$ these equations reduce to the {\it quantum differential equations} of quiver varieties \cite{MO19} while the symmetry $z\to z^p$ specializes to the Frobenius structures, which are well known in the theory of $p$-adic differential equations \cite{Dw89,Ked21}.  In this sense, the expected symmetry of QDE may be viewed as a $q$-deformation of the Frobenius structures. 

One may hope that the $q$-deformed Frobenius structures arising this way may be of significance in arithmetic geometry, see \cite{Sch,BS}. 

\subsection{$q$-hypergeometric case}
In this paper consider the  $q$-deformed Frobenius structure for simplest examples of quiver varieties given by the cotangent bundles over projective spaces $X=T^{*} \mathbb{P}^{n-1}$.  The cotangent bundle $X$ is equipped with a natural action of a torus $T=(\mathbb{C}^{*})^{n+1}$. We denote by $h$ and $\aa=(a_1,\dots, a_n)$ the corresponding equivariant parameters of $T$. 
The parameter $h$ denotes the character of the one-dimensional representation $\mathbb{C} \omega$ spanned by the symplectic form $\omega \in H^{2}(X,\matC)$. This parameter plays a special role in the present paper.

In Section \ref{qhypersec} we describe the QDE of $X$ which is nothing but the very classical $q$-hypergeometric equation with parameters $h$ and $\aa$.

\subsection{$q$-deformed Frobenius intertwiner}
In the context of enumerative geometry it is standard to represent the fundamental solution matrix of QDE as a power series with coefficients in the equivariant $K$-theory:
\bean \label{fudmat}
\Psi(h,\aa,q,z)  \in  K_{T}(X)^{\otimes 2}[[z]]
\eean
In Section \ref{qhypersec} we fix such $\tilde{\Psi}(h,\aa,q,z)$ where $\tilde{}$  refers to a certain specific choice of the normalization, which involves $q$-gamma functions. The key property of this normalization, relevant to our work, is that $ \tilde{\Psi}(h, \aa, q, z)$ satisfies a set of regular difference equations in the equivariant parameters $h$ and $\aa$; see Theorem \ref{shift_thm}.

We consider the power series
\bean \label{frobdefin}
\Fr(h,\aa,q,z) = \tilde{\Psi}(p h,p\aa,q,z) \tilde{\Psi}( h,\aa,q^{p},z^p)^{-1} \in  K_{T}(X)^{\otimes 2}[[z]]
\eean
Fixing a basis in $K_{T}(X)$ we can represent $\Fr(h,\aa,q,z)$ by an $n\times n$-matrix with coefficients given by power series in $z$.  
Assume that 
$$
|q-1|_p<1, \ \  \textrm{and} \ \  (h,a_1,\dots,a_n) \in \mathbb{Z}^{n+1}_p
$$ 
We show that under these assumptions (Theorem \ref{mainthm}), $\Fr(h,\aa,q,z) \in \mathsf{GL}_n(\hat{E}_p)$ where $\hat{E}_p$ denotes the field of $p$-adic analytic functions. This means that $\Fr(h,\aa,q,z)$ is a Frobenius intertwiner between the $q$-hypergeometric systems with parameters $(h,\aa,q^p,z^p)$ and $(ph,p\aa,q,z)$.

\vspace{2mm}
\noindent
We observe that the Frobenius intertwiner $\Fr(h,\aa,q,z)$ has simple specializations at $q$ given by $p$-adic unit roots. If $q$ is a $p$-adic unit root of order $p^s$ then 
$$
|q-1|_p =p^{-\frac{1}{p^{s-1}(p-1)}}<1
$$
We show that if $q$ is such a root (Theorem \ref{newthm}) then the Frobenius intertwiner $\Fr(h,\aa,q,z)$ specializes to a rational function of variables $z$ and
$
q^{p h}, \ \  q^{p a_1}, \dots, q^{p a_n}.
$
In particular, these rational functions satisfy
$$
\Fr(h,\aa,q,z)= \Fr(h^{(s)},\aa^{(s)},q,z)
$$
where $\aa^{(s)}=(a_1^{(s)},\dots, a^{(s)}_n)$ and $h^{(s)}$ are natural numbers defined  by $h^{(s)} = h \pmod {p^{s-1}}$, $a^{(s)}_i = a_i \pmod {p^{s-1}}$, $i=1,\dots, n$.
This property can be considered as a $q$-version of the Dwork's congruences  for the classical hypergeometric functions \cite{Dw69} and the Hasse-Witt matrices \cite{VZ}.

\subsection{The constant term of the Frobenius intertwiner} 
In Section \ref{frobintsect}, we give an explicit formula for the constant term of the Frobenius intertwiner (\ref{frobdefin}), i.e., its specialization at $z=0$.  First, for a vector bundle $\mathcal{V}$ over $X$ we define a $K$-theory valued characteristic class
$\Gamma_{p,q}(\mathcal{V})\in K_{T}(X)$. 
This class is constructed from the $p$-adic $q$-gamma function $\Gamma_{p,q}$ defined by Koblitz \cite{Ko80}. Second, we show that $\Fr(h,\aa,q,0)$ is an operator acting in $K_{T}(X)$ as the operator of multiplication by the $K$-theory class
\bean \label{contermform}
\Fr(h,\aa,q,0)=\Gamma_{p,q}((q^p-q^{p h})T^{1/2}X^{(p)})
\eean
where $T^{1/2}X$ denotes the polarization bundle of $X$ and $T^{1/2}X^{(p)}= \psi^p(T^{1/2}X)$ its twist by the $p$-th Adams operation. 

For generic values of the equivariant parameters $a_i\neq a_j$, the equivariant $K$-theory $K_{T}(X)$ has a distinguished basis given by the $K$-theory classes of $T$-fixed points. In this basis the operator 
$\Fr(h,\aa,q,0)$ is diagonal, with the eigenvalues 
$$
\Fr(h,\aa,q,0)_{i,i}=  \prod\limits_{j=1}^{n} \, \dfrac{\Gamma_{p,q}(p a_i+ p h - p a_j )}{\Gamma_{p,q}(p a_i-p a_j ) \Gamma_{p,q}(p h )}, \ \  i=1,\dots, n.
$$
We expect that (\ref{contermform}) holds for the constant term of the Frobenius intertwiners for more general varieties. 

\subsection{Limits and specializations}
The $q$-deformed Frobenius structure has several important specializations. First, the limit $q\to 1$ corresponds to reduction of the equivariant $K$-theory to the equivariant cohomology. In this limit the QDE specializes to the  hypergeometric differential equation
- the quantum differential equation of $X = T^{*} \mathbb{P}^{n-1}$. The $p$-adic $q$-gamma function $\Gamma_{p,q}$ specializes to the $p$-adic gamma function $\Gamma_p$ of Morita~\cite{Mo75}.  Combining this together we find that $\left.\Fr(h,\aa,q,z)\right|_{q=1}$ is the Frobenius structure for the hypergeometric differential equation. Its constant term is the operator acting in the equivariant cohomology $H^{*}_T(X)$ as  multiplication by the cohomology class
$\Gamma_p(TX^{(p)})$ where $TX^{(p)}$ denotes the tangent bundle of $X$ twisted by $p$-th Adams operation.
For generic values of the equivariant parameters this operator is diagonal in the basis of $T$-fixed points with eigenvalues 
\bean \label{conthyperU}  \ \ \ \ \ \ 
\begin{small}
\Fr(h,\aa,1,0)_{i,i}= \prod\limits_{w \in \textrm{char} (T_i X)}\, \Gamma_{p}(p w) =  \prod\limits_{j=1}^{n} \, \Gamma_p(p a_i +p h -p a_j ) \Gamma_p(p a_j -  p a_i )\ \ \
i=1,\dots, n.
\end{small}
\eean
where the first product is over the $T$-characters appearing in the tangent space to $X$ at $i$-th $T$-fixed point. In this way we reproduce the results of Dwork \cite{Dw69,Dw89} where the Frobenius intertwiners for the hypergeometric equations were discovered and also give them a geometric interpretation.  We also refer to \cite{Ked21} where a formula equivalent to (\ref{conthyperU}) was obtained using different tools.

Second, it is well known that in the limit $h\to \infty$ the quantum differential equation of $X=T^{*} \mathbb{P}^{n-1}$ specializes to quantum differential equation of $\mathbb{P}^{n-1}$. In the most degenerate case $a_1=a_2=\dots = a_n$, this equation coincides with the generalized Bessel equation studied over $\mathbb{Q}_p$ by Sperber in \cite{Sp80}. For $n=2$ this is the very classical Bessel equation investigated by Dwork \cite{Dw74}. As an application, in the limit $h\to \infty$ we obtain an explicit description for the Frobenius structures of Bessel equations. In particular, we obtain closed formulas for certain $p$-adic constants appearing in \cite{Dw74,Sp80} as values of $p$-adic zeta functions.

\subsection{Enumerative definition of Frobenius intertwiner}

In quantum cohomology over fields of positive characteristic one can define new classes of operators known as {\it the quantum Steenrod operations} \cite{F96,W20}.  These operations are defined enumeratively, namely as partition functions counting  stable maps from $\mathbb{P}^1$ with $p$-marked points to $X$.  The stable maps are assumed to be invariant under the cyclic group $\mathbb{Z}/p\mathbb{Z}$, which permutes the marked via rotation of the source $\mathbb{P}^1$.  It was recently shown in \cite{HL24} that for a large family of varieties, the quantum Steenrod operations coincide with the $p$-curvature of the quantum connection of these varieties.

We expect that this story extends naturally to the level of quantum $K$-theory of quiver varieties which is defined via equivariant count of quasimaps \cite{Oko17}. In particular, the $q$-difference equations have natural analogs of $p$-curvature \cite{KS}. In this context,  the quantum Steenrod operations shall be lifted to the $K$-theoretic {\it quantum Adams operations}. Moreover, the Frobenius intertwiner $\Fr(h,\aa,q,z)$ itself can be understood as a {\it partition function counting equivariant quasimaps} from $\mathscr{C}=\mathbb{P}^1$ to $X$ with two relative boundary conditions at $0,\infty \in \mathscr{C}$, where the relative quasimap moduli space at $\infty$ is assumed to be $\mathbb{Z}/p\mathbb{Z}$-equivariant in an appropriate sense.

When $q$ is specialized at $p$-th unit roots, the partition function $\Fr(h,\aa,q,z)$ is an operator which conjugates the $p$-curvature of the $q$-difference connection to the Frobenius twist of this connection. In particular, these objects have the same spectrum \cite{KS}. This hints at the arithmetic significance of $q$-deformed Frobenius intertwiners with $q$ specialized at the unit roots.

We briefly touch on these ideas in Section~\ref{conclsec}.


\section{$q$-hypergeometric functions \label{qhypersec}}
In this section we work over $\mathbb{C}$. In the next section we switch to the field of $p$-adic numbers. 
\subsection{} Let $D$ be the $q$-shift operator acting by $D f(z) = f(zq)$. We consider a $q$-difference equation:
\bean \label{qdedef}
P(\aa,\hbar,z) F(z) =0,
\eean
where $P(\aa,\hbar,z)$ denotes the $q$-difference operator:
\bean \label{qdeoper}
P(\aa,\hbar,z)=\sum_{m=0}^{n} \, \alpha_m (1 - z \hbar^{m})  D^m.
\eean
and $\alpha_m$  denotes the coefficient of $x^m$ in
$\prod\limits_{i=1}^{n}(1-x/u_i)$.
\subsection{}
Let us consider the $q$-hypergeometric power series
\bean \label{hypereomfun} 
f_i(\hbar,\uu,q,z) = \sum\limits_{d=0}^{\infty}\, \Big( \prod\limits_{j=1}^{n}\, \dfrac{(u_i \hbar / u_j)_{d}}{(u_i q / u_j)_{d}}\Big) \, z^{d}, 
\eean 
where $(x,q)_d:=(1-x) (1-x q)\dots (1-x q^{d-1})$.
Set
$
e_i(z) = \exp\Big({\frac{\ln(z) \ln(u_i)}{\ln(q)}}\Big),
$ so that $e_i(z q)= e_i(z) u_i$. 
One verifies that the functions
\bean \label{solbasis}
{F}_i(\hbar,\uu,q,z)=e_i(z) f_i(\hbar,\uu,q,z) 
\eean
satisfy (\ref{qdedef}) for all $i=1,\dots, n$. Thus, ${F}_i(\uu,\hbar,q,z)$ give a basis of solutions to (\ref{qdedef}).
\subsection{\label{remsect}} 
From (\ref{qdedef}) it is clear that the vectors
\bean \label{funcolumns}
{\Psi}_i(\hbar,\uu,q,z)=\left(\begin{array}{l}
{F}_{i}(\hbar,\uu,q,z)\\
{F}_{i}(\hbar,\uu,q,z q)\\
\dots\\
{F}_{i}(\hbar,\uu,q,z q^{n-1})
\end{array}\right)
\eean
satisfy a first order $q$-difference equation
\bean \label{qde}
{\Psi}_i(\hbar,\uu,q,z q) = \Mop(\hbar,\uu,q,z)\, {\Psi}_i(\hbar,\uu,q,z)
\eean
for all $i=1,\dots, n$, where $ \Mop(\hbar,\uu,q,z)$ is the companion matrix for the characteristic polynomial of
(\ref{qdeoper}):
$$
\Mop(\hbar,\uu,q,z)=\left(\begin{array}{cccccc}
0 &1 &0&0&\dots&0\\
0&0&1&0&\dots&0\\
0&0&0&1&\dots&0\\
..&..&..&..&..&..\\
 \frac{\bar{\alpha}_n(z-1)}{1-\hbar^{n} z} &\frac{\bar{\alpha}_{n-1}(z \hbar-1)}{1-\hbar^{n} z}&\frac{\bar{\alpha}_{n-2}(z \hbar^2-1)}{\hbar^{n} z-1}&\frac{\bar{\alpha}_{n-3}(z \hbar^3-1)}{1-\hbar^{n} z}&\dots&\frac{\bar{\alpha}_1(z \hbar^{n-1}-1)}{1-\hbar^{n} z}\\
\end{array}\right)
$$
where $\bar{\alpha}_k$ denotes the coefficients of $x^k$ in $\prod_{i=1}^{n}(1-x u_i)$.  The matrix 
$$
{\Psi}(\hbar,\uu,q,z)=\Big({\Psi}_1(\hbar,\uu,q,z),\dots, {\Psi}_{n}(\hbar,\uu,q,z)\Big).
$$
with $i$-th column given by ${\Psi}_i(\uu,\hbar,q,z)$ is a fundamental solution matrix of the $q$-hypergeometric system (\ref{qde}). Explicitly, it has the following matrix elements
\bean \label{fundsolsmat}
{\Psi}_{i,j}(\hbar,\uu,q,z)={F}_j(\hbar,\uu,q,z q^{i-1}).
\eean

Any matrix with columns given by   $\mathbb{C}$ - linear combinations of the  columns of ${\Psi}(\uu,\hbar,q,z)$ produces a new fundamental solution matrix. Thus, other (analytic) fundamental solution matrices are of the form
$$
{\Psi}(\hbar,\uu,q,z) \Lambda, \ \ \Lambda \in \mathsf{GL}_n(\mathsf{\mathbb{C}}).
$$
Finally, we also note that the fundamental solution matrix has the form
\bean \label{analitfund}
{\Psi}(\hbar,\uu,q,z) = {\Psi}'(\hbar,\uu,q,z) \textsf{E}(z). 
\eean
where $\textsf{E}(z)$ is the diagonal matrix $\textsf{E}(z)=\diag(e_1(z),e_2(z),\dots, e_n(z))$, and ${\Psi}'(\uu,\hbar,q,z)$ is analytic near $z=0$, i.e., is a power series in $z$.

\subsection{}
Throughout this paper we will be using both, the multiplicative parameters $\uu=(u_1,\dots, u_n)$, $\hbar$  and additive parameters $\aa=(a_1,\dots, a_n)$, $h$ which are related by
\bean  \label{multoadd}
u_1 = q^{a_1}, \dots, u_n =q^{a_n}, \ \ \hbar=q^{h}.
\eean
In the additive notations we write: 
\bean \label{explfundam}
\Psi_{i,j}(h,\aa,q,z)= f_{i}(h,\aa,q,z q^{i-1}) z^{a_j} q^{a_j(i-1)}
\eean
where we denote by $\Psi_{i,j}(h,\aa,q,z)$ and $f_{i}(h,\aa,q,z)$ the functions (\ref{fundsolsmat}) and (\ref{hypereomfun}) with parameters (\ref{multoadd}) respectively.

\section{Vertex function of $X=T^{*}\mathbb{P}^{n-1}$ \label{versec}}
 QDE (\ref{qde}) describes the equivariant quantum $K$-theory of the cotangent bundles over projectile spaces \cite{OS22}. We briefly overview the connection in this section. 
\subsection{\label{ktheorydef}}
Let $(\matC^{\times})^{n}$ be the torus acting on $\matC^{n}$ by scaling the coordinate lines with characters $u_1,\dots, u_n$. We have an induced action of this torus on the cotangent bundle over projectile space $X=T^{*} \mathbb{P}(\matC^n)$.  Let $T=(\matC^{\times})^{n} \times \matC^{\times}$ be a bigger torus acting on $X$: the first factor $(\matC^{\times})^{n}$ acts as before and the second $\matC^{\times}$ scales the cotangent directions with a character $\hbar^{-1}$. The $T$-equivariant $K$-theory of $X$ is the ring:
\bean \label{ktheorC}
K_{T}(X) = \matZ[L^{\pm},u_1^{\pm},\dots, u^{\pm}_n, \hbar^{\pm}]/(L-u_1) \dots (L-u_n).
\eean
where $L$ is the class of the line bundle $\mathcal{O}(1)$. Abusing notations we denote by the same symbols $u_1,\dots,u_n$ the trivial equivariant line bundles associated to the characters $u_i$. In this language, the additive parameters 
$a_i$, $h$ may be thought of as the first Chern classes of the line bundles $u_i$ and $\hbar$. Similarly, let $x$ be the first Chern class of $L$, then in our conventions $L=q^{x}$.

\subsection{} 
Let us define a formal extension of this ring
$$
\hat{K}_{T}(X) = \mathrm{completion \ of} \ \  K_{T}(X)[z^x][[q]],
$$
so that it contains the following gamma class:
\bean \label{gammaclass}
\Phi(h,\aa,x,q,z)=\Big(\dfrac{(q^{h},q)_{\infty}}{(q,q)_{\infty}}\Big)^{\!\!n} \, \prod\limits_{i=1}^{n} \dfrac{(q^{\,x+1-a_i}  ,q )_{\infty}}{(q^{\,x+h-a_i}  ,q )_{\infty}}\,  z^{\,x}  \in \hat{K}_{T}(X),
\eean
where
$(u,q)_{\infty}=\prod\limits_{m=0}^{\infty}\, (1- u q^{m})$
is the reciprocal of the $q$ - gamma function. 
The normalized vertex function of $X$ is a generating function counting equivariant quasimaps to quiver varieties, see Section 7.2 of \cite{Oko17} for definitions. For $X$ the vertex function has the following form: 
\bean \label{normvert}
\widetilde{V}(h,\aa,x,q,z) =  \sum\limits_{d=0}^{\infty}\, \Phi(h,\aa,x+d,q,z) \, \in \hat{K}_{T}(X)[[z]]
\eean
One verifies that 
$$
P(\uu,\hbar,z) \, \widetilde{V}(h,\aa,x,q,z) = (L-u_1)\dots (L-u_n) \, \Phi(h,\aa,x,q,z)
$$
where $P(\uu,\hbar,z)$ is the $q$-difference operator given by (\ref{qdeoper}). 
Note that in (\ref{ktheorC}) the vertex function  solves the $q$-hypergeometric equation (\ref{qdedef}). 

\subsection{}
The components of the class (\ref{normvert}) in any chosen basis of $K$-theory (\ref{ktheorC}) provide a basis of solutions of the $q$-hypergeometric system (\ref{qdedef}). One convenient basis consists of the $K$-theory classes of the  $T$-fixed points $X^{T}$. The components of the class (\ref{normvert}) in this basis are obtained 
by the specialization $L=u_i$ or, in the additive notations, by the specialization $x=a_i$. In this way we obtain the basis of solutions given, up to a normalization multiple, by the hypergeometric series (\ref{solbasis}):
\bean \label{normhyperg}
\widetilde{V}(h,\aa,a_i,q,z) =  \Phi(h,\aa,a_i,q,z) {F}_i(h,\aa,q,z).
\eean
In this way, the vertex function (\ref{normvert}) provides a uniform ``basis free'' description of the solutions to (\ref{qdedef}).

\subsection{}
It will be convenient to normalize all solutions as in the vertex  (\ref{normhyperg}). In particular, we introduce the normalized fundamental solution matrix 
$$
\tilde{\Psi}(h,\aa,q,z)=\Big(\tilde{\Psi}_1(h,\aa,q,z),\dots, \tilde{\Psi}_{n}(h,\aa,q,z)\Big),
$$
with $i$-th column 
$$
\tilde{\Psi}_i(h,\aa,q,z) = {\Psi}'_i(h,\aa,q,z)  \Phi(h,\aa,a_i,q,z)
$$
where ${\Psi}'_i(h,\aa,q,z)$ is the analytic part of the fundamental solution matrix as defined in  (\ref{analitfund}). We can also write the last formula simply as
\bean \label{psitildef}
\tilde{\Psi}(h,\aa,q,z) = {\Psi}'(h,\aa,q,z) \Phi(h,\aa,x,q,z)
\eean
where $\Phi(\aa,h,x,q,z)$ is now understood as the operator of multiplication by the gamma class (\ref{gammaclass}) in the equivariant $K$ - theory. In particular, in the basis of the $T$-fixed points this operator is represented by the diagonal matrix with eigenvalues given by $\Phi(h,\aa,a_i,q,z)$, $i=1,\dots, n$. 

\subsection{}
The normalized fundamental solution matrix $\tilde{\Psi}(\aa,h,q,z)$  has an enumerative meaning: it coincides with the {\it capping operator} for $X$, which is the partition function of the relative quasimaps to $X$, see Section 7.4 in \cite{Oko17} for the definitions.  In particular, normalized as in (\ref{psitildef}) the fundamental solution matrix satisfies the {\it shift equations} which are the difference equations in the equivariant parameters $\aa$ and $h$. These equations exist in general for an arbitrary Nakajima variety. Applied to $X=T^{*}\mathbb{P}^{n-1}$ they can be formulated as follows:
\begin{thm}[Theorem 8.2.20, \cite{Oko17}] \label{shift_thm}
The normalized fundamental solution matrix (\ref{psitildef}) satisfies the following system of difference equations
\bean \nonumber
\tilde{\Psi}(h,a_1,\dots,a_i +1, \dots a_n,q,z)  = A_i(\aa,h,q,z) \tilde{\Psi}(h,a_1,\dots,a_i, \dots a_n,q,z),\\  \nonumber
\tilde{\Psi}(h+1,a_1,\dots,a_i, \dots a_n,q,z)  = H(\aa,h,q,z) \tilde{\Psi}(h,a_1,\dots,a_i, \dots a_n,q,z).
\eean
where $A_i(\aa,h,q,z)$ and $H(\aa,h,q,z)$ denote matrices whose entries are rational functions in the equivariant parameters $\uu$, $\hbar$, $q$ and $z$:
$
A_i(\aa,h,q,z), H(\aa,h,q,z)  \in \mathsf{GL}_{n}(\matQ(\uu,\hbar,z,q) ).
$
\end{thm}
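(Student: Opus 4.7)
My plan is to prove the theorem by direct computation, exploiting the explicit form of the normalized fundamental solution matrix provided by the vertex function. By (\ref{normhyperg}) and (\ref{fundsolsmat}), the entries of $\tilde{\Psi}(\aa,h,q,z)$ are (up to the monodromy factors $e_j(z)$) the products $\Phi(\aa,h,a_j,q,z)\, F_j(\aa,h,q,z q^{i-1})$, so it suffices to analyze separately how the gamma-class prefactor $\Phi$ and the hypergeometric series $F_j$ transform under the shifts $a_k \to a_k + 1$ and $h \to h + 1$.

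The behavior of $\Phi$ is elementary: a telescoping of the $q$-Pochhammer products yields
\be
\frac{\Phi(\aa+e_k,\, h,\, x,\, q,\, z)}{\Phi(\aa,h,x,q,z)} \;=\; \frac{1 - q^{x-a_k}}{1 - q^{x+h-a_k-1}}, \qquad
\frac{\Phi(\aa,\, h+1,\, x,\, q,\, z)}{\Phi(\aa,h,x,q,z)} \;=\; \frac{\prod_{i=1}^n (1 - q^{x+h-a_i})}{(1-q^h)^n}.
\ee
Specialized at $x = a_j$ (with $z \to z q^{i-1}$ in row $i$), both ratios are rational in $\uu,\hbar,q,z$. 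For the series $F_j$, the required input is a system of $q$-contiguous relations expressing $F_j(\aa + e_k, h, q, z)$ and $F_j(\aa, h+1, q, z)$ as $\matQ(\uu,\hbar,q,z)$-linear combinations of the shifted values $F_j(\aa, h, q, z q^m)$ for $m = 0, 1, \dots, n-1$. These are the $q$-analogues of Thomae's classical contiguous relations, and I would derive them by matching coefficients of $z^d$ using the identity $(q^{c+1};q)_d/(q^c;q)_d = (1 - q^{c+d})/(1 - q^c)$ applied factor by factor in the Pochhammer product.

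Once both ingredients are assembled, the shifted fundamental matrix is written as a $\matQ(\uu,\hbar,q,z)$-linear combination of the columns $\tilde{\Psi}_j(\aa,h,q,z q^m)$, and the first-order system (\ref{qde}), whose companion matrix $\Mop_{\cL}(\aa,h,q,z)$ already has entries in $\matQ(\uu,\hbar,q,z)$, lets one express each such column back in terms of $\tilde{\Psi}(\aa,h,q,z)$ itself. This produces the required matrices $S^{a_k}$ and $S^h$ in $GL_n(\matQ(\uu,\hbar,q,z))$; invertibility is automatic, since applying the inverse shifts $a_k \to a_k - 1$ and $h \to h - 1$ produces the inverse matrices.

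The main obstacle is the contiguous-relation step: writing the coefficients in closed form is a nontrivial combinatorial calculation, and one must also verify that the resulting finite linear system is nondegenerate so that the reduction via (\ref{qde}) is well-defined. A conceptually cleaner alternative, followed in \cite{Oko17}, is the enumerative derivation: identify $S^{a_k}$ and $S^h$ geometrically with $K$-theoretic capping operators for quasi-maps with shifted framings, whose rationality in the equivariant parameters is built into the torus-equivariant structure of the quasi-map moduli.
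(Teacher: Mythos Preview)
The paper does not prove this statement at all: it is quoted verbatim as Theorem~8.2.20 of \cite{Oko17} and treated as an external input from enumerative geometry (``An important input from the enumerative geometry we need for our goals is the existence of the shift operators''). So there is no proof in the paper to compare your attempt against; the only comparison is with the geometric argument in \cite{Oko17}, which you yourself flag at the end of your proposal.

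Your route is genuinely different from that reference. Okounkov constructs $S^{a_i}$ and $S^h$ geometrically as glue/shift operators on moduli of relative quasimaps, and their rationality comes for free from properness of the relevant moduli and equivariant localization. You instead propose a direct computation: telescope the gamma-class prefactor (this part is correct and easy) and then invoke $q$-contiguous relations to handle the series $F_j$. The advantage of your approach is that it stays entirely within the explicit hypergeometric presentation and requires no geometry; the cost is that the contiguous-relation step, which you acknowledge as ``the main obstacle,'' is only asserted, not carried out.

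Two points to be careful about if you pursue this. First, the shift $a_k\to a_k+1$ does not act uniformly on the columns: for $j=k$ both the monodromy factor $e_k(z)$ and the ``diagonal'' Pochhammer $(u_k q/u_k)_d=(q)_d$ are affected differently than for $j\neq k$, so checking that a \emph{single} left-multiplier $S^{a_k}$ works for all columns simultaneously is not automatic from treating each $F_j$ in isolation. The clean way around this is to establish the shift relation at the level of the $K$-theory-valued vertex $\widetilde V(\aa,x,h,q,z)$ \emph{before} specializing $x=a_j$; then specialization gives all columns at once. Second, the invertibility argument (``applying the inverse shifts produces the inverse matrices'') presupposes that the contiguous relations you have not yet written down are themselves invertible over $\matQ(\uu,\hbar,q,z)$, which again needs to be checked rather than asserted.
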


These difference equations are useful for the analysis of QDEs, particularly when the parameters $h$, $\aa$ are specialized to $\mathbb{Z}_p$-values, to which we now switch out attention.

\section{$p$ - adic completions and norms \label{gaucomp}}
In this section, we provide a brief overview of 
$p$-adic analysis, 
$p$-adic analytic functions, and give several illustrative examples of simple Frobenius structures.
Our running example in this section is the quantum differential and quantum difference equation associated with a ``point'' $X=T^{*} \mathbb{P}^0$. The Frobenius structures arising in this case can be described using only elementary combinatorics.

\subsection{} 
We denote by $|\cdot|_p$ the $p$-adic norm on $\mathbb{Q}$ normalized so that $|p^s|=1/p^s$.  The field of $p$-adic numbers $\mathbb{Q}_p$ is the completion of $\mathbb{Q}$ with respect to this norm. $\mathbb{Z}_p\subset \mathbb{Q}_p$ denotes the set of integers of $\mathbb{Q}_p$ which is the subset of elements with norm $|x|_p\leq 1$. We also denote by $\Omega$ the completion of the algebraic closure of $\mathbb{Q}_p$.

Let $f(z)$ be a rational function:
$$
f(z) = \dfrac{\sum_{i} a_i z^i}{\sum_{j} b_j z^j} \in \mathbb{Q}(z)
$$
The $p$-adic Gauss norm on $\mathbb{Q}(z)$ is defined by
$$
|f(z)|_{\rm Gauss} := \dfrac{\textrm{max}_i\{|a_i|_p\}}{\textrm{max}_j\{|b_i|_p\}}
$$
We denote by  $E_p$ the field of $p$-adic analytic functions which is the completion of $\mathbb{Q}(z)$ with respect to the Gauss norm. By definition, an  element $f(z) \in E_p$ is a sum
$$
f(z) = \sum\limits_{n=0}^{\infty}\, f_i(z), \ \ f_i(z) \in \mathbb{Q}(z)
$$
converging in the Gauss norm. Note that 
$
|f(z)|_{\rm Gauss} = \max\{|f_i(z)|_{\rm Gauss}:i=0,1,\dots\}. 
$

\subsection{} 
One of the main questions we encounter in this paper is the following: let $f(z) \in \mathbb{Q}[[z]]$ be a power series. When is $f(z)$ a Taylor expansion of an element $g(z) \in E_p$? 

Since the norm of analytic elements is bounded, there exist $n$ such that $|p^n g(z)|_{\rm Gauss}\leq 1$.  Thus, if $f(z)$ is the series expansion of $g(z) \in E_p$ then we can ``clear denominators'' - the coefficients of power series $p^n f(z)$ do not have powers of $p$ appearing in the denominators.  In particular, the coefficients of the power series $p^n f(z)$ have well defined reductions $\pmod{p^s}$ for $s=1,2,\dots$. These reductions can be used to determine whether $
f(z)$ is the expansion of an analytic function using the following elementary lemma.

\begin{lem} \label{lemmagaussnorm}
A power series $f(z) \in \mathbb{Q}[[z]]$ is a Taylor expansion of an analytic element $g(z) \in E_p$ if and only if there exist $n \in \mathbb{N}$ such that for any $s\in \mathbb{N}$
\bean \label{condit}
p^n f(z) \equiv g_s(z) \pmod{p^s}
\eean 
where $g_s(z)$ is a power series expansion of some rational function from $\mathbb{Q}(z)$. 
\end{lem}
\begin{proof}
First assume $f(z) \in \mathbb{Q}[[z]]$ is such that (\ref{condit}) holds for some $n$. 
Let us define $f_i(z)$ by
$$
f_{i}(z)= g_{i+1}(z)-g_{i}(z)
$$
By construction $|f_{i}(z)|_p <1/p^{i}$, thus the sum $\sum_{i=0}^{\infty} f_{i}(z)$ converges in the Gauss norm and we obtain:
$$
f(z) = \dfrac{1}{p^n} \sum_{i=0}^{\infty} f_{i}(z) \in E_p
$$
For the opposite direction, assume $f(z)$ is a power series in $z$ which is a Taylor expansion of some ${g}(z) \in E_p$. We have
\bean \label{gsum}
{g}(z) =\sum\limits_{i=0}^{\infty} \, g_i(z)
\eean 
for some $g_i(z) \in \mathbb{Q}(z)$. Assume that $|{g}(z)|_{\rm Gauss} =p^{n}$ for some $n \in \mathbb{Z}$. 
Since (\ref{gsum}) converges in the Gauss norm it follows that
for any $s \in \mathbb{N}$ there is $N_s \in \mathbb{N}$ such that $p^n g_{i}(z) \equiv 0 \pmod{p^s}$ for all $i>N_s$. Therefore,
$$
p^n f(z) \equiv  \sum\limits_{n=0}^{N_s}\, g_i(z) \pmod{p^s} 
$$
\end{proof}
In Section \ref{numsec} we will demonstrate how this Lemma can be used to discover non-trivial Frobenius structures numerically.

\vspace{5mm}

\noindent
{\bf Example:} Let $h \in \mathbb{Q}$ be a rational number such that $h \in \mathbb{Z}_p$.  Let us consider a power series 
\bean \label{hyper0ex}
f(h, z) = \sum\limits_{n=0}^{\infty}\, (-1)^n \binom{h}{n} z^n
\eean 
which is a Taylor expansion of $(1-z)^{h}$. It is known that this series is not an expansion of an element from $E_p$ for generic $h$. However, the power series 
\bean \label{hyperint0}
\Fr(h, z)=\dfrac{f(p h,z)}{f(h, z^p)} 
\eean
is in $E_p$.  Indeed
$$
\Fr(h,z) = \dfrac{(1-z)^{p h}}{(1-z^p)^{h}} =  \Big(1+ \dfrac{(1-z)^p-(1-z^p)}{1-z^p} \Big)^{h}
$$
Note that $(1-z)^p-(1-z^p)=p \alpha(z)$ for some $\alpha(z)\in \mathbb{Z}[z]$ and therefore 
$$
\Fr(h,z) = \sum\limits_{n=0}^{\infty}\, \binom{h}{n} p^n \dfrac{\alpha(z)^n}{(1-z^p)^n}
$$
For $h \in \mathbb{Z}_p$ we also have $\binom{h}{n} \in \mathbb{Z}_p$ for any $n$.  From which we see that:
\bean \label{approxu}
\Fr(h,z) \equiv  \sum\limits_{n=0}^{s-1}\,\binom{h}{n} p^n \dfrac{\alpha(z)^n}{(1-z^p)^n} \pmod{p^s}
\eean 
Therefore $\Fr(h,z) \in E_p$ by the previous Lemma.

\subsection{}
Let $V(\alpha,z)$ denote the finite dimensional $\mathbb{Q}_p$-space of solutions to some $p$-adic differential equation in $z$ with parameters $\alpha$.  As we explain in the next section, a Frobenius intertwiner gives a map:
$$
V(\alpha, z) \stackrel{\Fr(z)}{\longrightarrow} V(\alpha', z^p)
$$
As a map between two {\it different} vector spaces $\Fr(z)$ carries no interesting information. However, when $\alpha'=\alpha$ at the points $z^p=z$ it becomes an automorphism of $V(\alpha,z)$ and the characteristic polynomial of $\Fr(z)$ at such points is an interesting invariants of the QDEs.  The invariants appearing in this way find many applications in the theory of Gauss and exponential sums \cite{Dw74,Sp80,K90}.

The points $z\in \mathbb{Q}_p$ satisfying $z^p=z$ are the Teichmüller elements:
recall that for any $a\in \mathbb{F}_p$ there is unique $[a]\in \mathbb{Q}_p$ such that
$[a]^p=[a]$ and $[a] \equiv a \pmod{p}$.
Clearly, the map $\mathbb{F}_p^{\times} \to \mathbb{Q}^{\times}_p$, sending $a$ to $[a]$, is a multiplicative character. The element $[a]\in \mathbb{Q}_p$ is called the Teichmüller representative or the Teichmüller lift of $a$.

By definition, if $a\neq0$ then $|[a]|_p =1$. For this reason, a power series $f(z) \in \mathbb{Q}[[z]]$ usually can not be evaluated at such points, since the convergence is not guaranteed. A notable property of the analytic elements $g(z) \in E_p$ is that they often can be evaluated at points $z\in \mathbb{Z}_p$ with $|z|_p=1$: 
 if $f(z) \in E_p$ then up to terms with norm $1/p^s$ it can be approximated by  a rational function. The values of these rational functions at $[a]$ then approximate $f([a])$ with an error of norm $1/p^s$.

For these reasons, Frobenius intertwiners are naturally required to have coefficients in $E_p$.  This motivates the definition we give in the next section.  

\vspace{5mm}

\noindent
{\bf Example:}  
Continuing our running example, we note that the power series (\ref{hyper0ex}) spans $1$-dimensional space of solutions of the simplest 
hypergeometric differential equation
\bean \label{interhyper}
(1-z) z \dfrac{d f(z)}{dz} + z h f(z)=0 
\eean 
In the hypergeometric notations $f(z)={}_0F_{1}(-h;;z)$. By its definition, multiplication by power series (\ref{hyperint0}) maps solutions with parameters $(h,z^p)$ to solutions with parameters $(ph,z)$. As we have already seen, $U(h,z)$ is in $E_p$. Thus, $U(h,z)$ is  a Frobenius intertwiner between these solutions.  

Assume that $h$ is such that $(p-1)h \in \mathbb{Z}$. Then
$(1-z)^{h(1-p)}$ is a rational function and therefore the composition
\bean \label{interprime}
\Fr'(z)= (1-z)^{h(1-p)} \Fr(h,z) = \dfrac{(1-z)^h}{(1-z^p)^h}
\eean
is also in $E_p$. Thus $\Fr'(z)$ is a Frobenius intertwiner between solutions at $(h,z^p)$ and $(h,z)$. Using the approximations by rational functions (\ref{approxu}) one can prove that the value of this intertwiner at the Teichmüller elements equals:
\bean \label{punroot}
\Fr'([a]) = [a-1]^{h(1-p)}
\eean
Note that a naive substitution of $z=[a]$ to (\ref{interprime}) would give an incorrect answer $\Fr'([a])=1$ (since $[a]^p=[a]$). The $p$-adic unit root (\ref{punroot}) is the ``interesting invariant'' of the differential equation (\ref{interhyper}) which we mentioned above.

\subsection{} 
Let show that $\Fr(h, z)$ given by (\ref{hyperint0}) is in $E_p$ in a different way using the $q$-deformation. 
First, let us fix $q\in \Omega$ of the form $q=1+t$ with $|t|_p<1$. In this case, for $h\in \mathbb{Z}_p$ we have well defined elements  $q^{\pm h}  \in \Omega$. The $q$-deformation of the hypergeometric $f(z)={}_0F_{1}(-h;;z)$ is given by the power series
\bean \label{qdefpow0}
f(h, q,z)= \sum\limits_{n=0}^{\infty}\, \dfrac{(q^{-h},q)_n}{(q,q)_n} z^n, \ \ \ (x,q)_{n}=(1-x) (1-x q)\dots (1-x q^{n-1}),
\eean
which solves the $q$-analog of (\ref{interhyper}):
\bean \label{qdee0}
f(h, q,z q) = \dfrac{(1-z)}{(1-z q^{-h})} f(h, q,z)  
\eean
At $h\in \mathbb{N}$ this power series truncates to a Laurent polynomial
\bean \label{fintpoint}
f(h, q,z) = (1-z q^{-1}) (1-z q^{-2}) \dots (1-z q^{-h}). 
\eean 
Assume that $\zeta\neq 1$ is a unit root $\zeta^{p^s}=1$. Assume further that the order of the root $\zeta$ is $p^m$ for some $m\leq s$. Let $h \in \mathbb{N}$ and write $h=h^{(s)} + p^{s-1} h'$  for some natural numbers $h^{(s)}$ and $h'$. 
From (\ref{fintpoint}) we find 
$$
f(p h, \zeta,z) = f(p h^{(s)}, \zeta,z) (1-z^{p^m})^{h' p^{s-m}}   
$$
Since $\zeta^p$ is also a unit root or order $p^{m-1}$ we obtain similarly
$$
f(h,\zeta^p,z^p) = f(h^{(s)}, \zeta^p,z^p) (1-z^{p^m})^{h' p^{s-m}}  
$$
where in the last two equalities we used that
$
\prod_{i=0}^{m-1}(1-z \zeta^i) = (1-z^m)
$
if $\zeta$ has  order~$m$.

Let us consider the power series
$$
\Fr(h,q,z) = \dfrac{f(p h, q,z) }{f(h, q^p,z^p)}
$$
From the above computations, we see that for $h\in \mathbb{N}$ we have
\bean \label{interatroot}
\Fr(h,\zeta,z) =  \Fr(h^{(s)},\zeta,z)  
\eean
where $h^{(s)}$ is a natural number defined by $h^{(s)} = h \pmod{p^{s-1}}$. 

The equation (\ref{interatroot}) is crucial. Note that (\ref{interatroot}) extends to $p$-adic integers $h\in \mathbb{Z}_p$. Clearly, unit roots $q^{p^s}=1$ such that $q\neq 1$ are the roots of the polynomial:
$$
[p^s]_q  = \dfrac{1-q^{p^s}}{1-q}
$$
Thus, the relation (\ref{interatroot}) implies that for any $h\in \mathbb{Z}_p$ we have
$$
\Fr(h,q,z) =  \Fr(h^{(s)},q,z)  + [p^s]_p \, \Big( \ \ \dots \ \ \Big)
$$
where $\Fr(h^{(s)},q,z)$ is a rational function (since $h^{(s)}$ is integral) and $\dots$ stands for a power series in $z$ with coefficients regular at $p^s$-th roots. Note that for $q=1+t$ with $|t|_p<1$ we have $[p^l]_p = 0 \pmod {p^s}$ for sufficiently large $l$ thus
\bean \label{qfrobrel}
\Fr(h,q,z)  \pmod {p^s}.
\eean 
is a rational function of $z$.

In the limit $q\to 1$ $\Fr(h,q,z)$ converges to  (\ref{hyperint0}) and therefore 
$
\Fr(h,z) \pmod {p^s}.
$
is also rational and by Lemma \ref{lemmagaussnorm} we conclude that $\Fr(h,z) \in E_p$. 

The above computation (\ref{qfrobrel})
 also shows what $\Fr(h,q,z)\in \hat{E}_p$ where 
$\hat{E}_p$ is the completion of $\Omega(z)$ with respect to Gauss norm (we need to replace $\mathbb{Q}$ with $\Omega$ to accommodate for $q \in \Omega$). Thus, $\Fr(h,q,z)$ is a Frobenius intertwiner between $f(h,q^p,z^p)$ and $f(p h,q,z)$.

\vspace{3mm}

Finally, let us note that the hypergeometric differential equation (\ref{interhyper}) is the quantum differential equation associated with a zero-dimensional quiver variety $X=T^{*}\mathbb{P}^{0}$ (enumerative geometry of such varieties may be non-trivial, see \cite{DS}). 
The $q$-difference equation (\ref{qdee0}) is the $K$-theoretic quantum difference equation for this $X$. The power series (\ref{hyper0ex}) and (\ref{qdefpow0}) represent the ``fundamental solution matrices'' of these equations. The running example of this section is thus, the most basic $n=1$ case of the hypergeometric functions (\ref{hypereomfun}) we consider in this paper.

\section{Frobenius intertwiner for $q$-hypergeometric equation \label{frobintsect}}

\subsection{\label{frobdef}} 
Let us consider the  $q$-difference hypergeometric equation (\ref{qde}).
Unless otherwise specified, we assume throughout this section that $h\in \mathbb{Z}_p$, $\aa \in \mathbb{Z}^n_p$. We assume that $q\in \Omega$ is of the form $q=1+t$ with $|t|_p<1$. In this case 
$$
\hbar =q^{h}, \ \ u_1 =q^{a_1},\dots , u_n =q^{a_n}
$$
are well defined in $\Omega$. Let $\hat{E}_p$ be the completion of $\Omega(z)$ with respect to the Gauss norm. 
The following is motivated by discussion of Section \ref{gaucomp}. 
\begin{defn}
A Frobenius intertwiner from a $q$-hypergeometric system with parameters $(h,\aa,q^p,z^p)$ to a $q$-hypergeometric system with parameters $(h',\aa',q,z)$ is an element $\Fr(z) \in \mathsf{GL}_n(\hat{E}_p)$ such that
$ \Mop(\hbar,\aa,q^p,z^p) = \mathsf{U}(z q)^{-1} \,\Mop(\hbar',\aa',q,z)\,  \mathsf{U}(z)$. 
\end{defn}

If ${\Psi}(h,\aa,q^p,z^p)$ is a fundamental solution matrix for the $q$-hypergeometric system with parameters $(h,\aa,q^p,z^p)$ and $\Fr(h,\aa,q,z)$ is a Frobenius intertwiner as in the Definition above 
then $\Fr(h,\aa,q,z) {\Psi}(h,\aa,q^p,z^p)$ is a fundamental solution matrix for the $q$-hypergeometric system with parameters $(h',\aa',q, z)$. Thus, it is of the form ${\Psi}(h', \aa',q,z) \Lambda$ where $\Lambda \in \mathsf{GL}_n(\Omega)$ is some constant matrix. We obtain a relation 
\bean \label{frobstru}
{\Psi}(h',\aa',q,z) \Lambda = \Fr(h,\aa,q,z) {\Psi}(h,\aa,q^p,z^p).
\eean
$\Fr(h,\aa,q,0)$ is called the {\it constant term of the Frobenius intertwiner}. If the fundamental solution matrix is normalized by ${\Psi}(h,\aa,q,0)={\rm Id}$ (the identity matrix), then $\Fr(0) =\Lambda$.

\subsection{}
Let us consider the power series:
\bean \label{froint}
\Fr(h,\aa,q,z) = \tilde{\Psi}(p h,p\aa,q,z) \tilde{\Psi}( h,\aa,q^{p},z^p)^{-1} \in  K_{T}(X)^{\otimes 2}[[z]]
\eean 
where $ \tilde{\Psi}(h,\aa,q,z)$ denotes the fundamental solution matrix normalized as in (\ref{psitildef}). Using the natural non-degenerate paring on $K_{T}(X)$ we can also view it as an operator 
$$
\Fr(h,\aa,q,z) :  K_{T}(X)[[z]] \longrightarrow  K_{T}(X)[[z]]
$$
In particular, the constant term is a linear map in $K$-theory 
\bean \label{constterm2}
\Fr(h,\aa,q,0) :  K_{T}(X) \longrightarrow  K_{T}(X)
\eean 

\vspace{5mm}

\noindent
The goal of this section is to prove the following result (see Section \ref{ktheorydef} for notations and description of the ring $K_{T}(X)$):
\begin{thm}
\label{mainthm} The power series $\Fr(h,\aa,q,z)$ 
defined by (\ref{froint}) is a Frobenius intertwiner between the $q$-hypergeometric systems with parameters  $(h,\aa,q^p,z^p)$ and $(p h,p \aa,q,z)$, where $p\aa= (p a_1,\dots, p a_n)$. The constant term of the intertwiner (\ref{constterm2}) acts in $K_{T}(X)$ as the operator of multiplication by the K-theory class 
\bean \label{frobzeroterm}
\Fr(h,\aa,q, 0) = \prod\limits_{j=1}^{n} \, \dfrac{\Gamma_{p,q}(p x+ p h - p a_j )}{\Gamma_{p,q}(p x-p a_j ) \Gamma_{p,q}(p h )}.
\eean
\end{thm} 
Definition of  $p$-adic $q$-Gamma function $\Gamma_{p,q}$ is reminded in Section \ref{pqgamm}.

\subsection{} 
 Let us first analyze the  constant term of this operator.
\begin{prop} The constant term of the power series (\ref{froint}) acts in $K_{T}(X)$ as multiplication by the following K-theory class:  
\bean \label{zertermkob}
\Fr(h,\aa,q,0)  = \prod\limits_{j=1}^{n} \, \dfrac{\Gamma_{p,q}(p x+ p h - p a_j )}{\Gamma_{p,q}(p x-p a_j ) \Gamma_{p,q}(p h )} 
\eean 
where $\Gamma_{p,q}$ denotes the $p$-adic $q$-gamma function . 
\end{prop}
\begin{proof}
From the normalization (\ref{psitildef}) we find:
$$
\Fr(h,\aa,q,0) = \dfrac{\Phi(p h,p\aa,p x,q,z)}{\Phi(h,\aa,x,q^p,z^p)} 
$$
From  definition (\ref{gammaclass}) we see that this ratio is exactly the product of factors as in (\ref{qfactors}). The Proposition follows by Lemma \ref{extlem}.
\end{proof}
If  the set of the $T$-fixed points $X^{T}$ is finite, then multiplication by any equivariant $K$-theory class is diagonal in the basis of $K_{T}(X)$ given by the classes of $T$-fixed points. In our case $X=T^{*}\matP^{n-1}$ the set $X^{T}$ is  finite  when all $a_1,\dots, a_n$ are pairwise distinct. The corresponding eigenvalues of multiplication by $\Fr(h,\aa,q,0)$ are obtained by specialization $x=a_i$. We thus conclude with the following result. 
\begin{cor} \label{fpcorrol}
If  $a_1,\dots, a_n$ are pairwise distinct then the constant term of the Frobenius intertwiner $\Fr(h,\aa,q,0)$ is diagonal in the basis of torus fixed points of $K_{T}(X)$. The corresponding eigenvalues are equal:
\bean \label{frobconj}
\Fr(h,\aa,q,0)_{i,i}=\prod\limits_{j=1}^{n}\, \dfrac{\Gamma_{p,q}(p(a_i-a_j +h) )}{\Gamma_{p,q}(p(a_i-a_j)) \Gamma_{p,q}(p h )}, \ \ \ i=1,\dots,n.   
\eean 
\end{cor}

This result has the following $K$-theoretic formulation. Let $\mathcal{V}$ be a rank $r$ vector bundle over $X$. Using the splitting principle, we may assume that in $K$-theory $\mathcal{V}$ is a sum of $r$ line bundles with Chern roots $x_1,\dots, x_r$. We can thus define a $K$-theory valued characteristic class by
\bean \label{KthGclass}
\Gamma_{p,q}(\mathcal{V}) = \prod_{i=1}^{r} \Gamma_{p,q}(x_i)^{-1}
\eean
Recall that every Nakajima variety is equipped with a well-defined polarization bundle $T^{1/2} X$, i.e., $T^{1/2} X$  is a $K$-theory class representing a ``half of the tangent bundle'' \cite{MO19}:
$$
TX  =T^{1/2} X +\hbar^{-1} \overline{T^{1/2} X} 
$$
where $\hbar$ is the $T$-character of the symplectic form. In our case $X=T^{*}\mathbb{P}^{n-1}$, the polarization can be chosen in the form 
$T^{1/2} X=W^{*} \otimes L - 1$. Where $W$ is a trivial rank $r$ vector bundle with Chern roots given by the equivariant parameters $a_1,\dots,a_n$,  and  $L$ is the tautological line bundle with Chern root $x$. We note that the arguments of the gamma functions in (\ref{frobzeroterm}) is over the $p$-multiples of the Chern roots of the virtual bundle $(q-\hbar) T^{1/2}X$, in other words is over the Chern roots of $\psi^p( (q-\hbar) T^{1/2}X)$ where $\psi^p$ is the $p$-th Adams operation in the equivariant $K$-theory (which acts on the Chern roots by $x_i\to p x_i$). Combining all this together we obtain: 
\begin{prop}  \label{propKthCT}
The constant term of (\ref{constterm2}) acts as multiplication  by the K-theory class
\bean 
\Fr(h,\aa,q,0)  = \Gamma_{p,q}((q^p-\hbar^p) T^{1/2} X^{(p)})
\eean 
where $T^{1/2} X^{(p)}$ is  the polarization bundle of $X$ twisted by the $p$-th Adams operation.  
\end{prop}
We expect that in this form the Proposition holds for every Nakajima variety $X$.

\subsection{}
Let us consider the vertex function (\ref{normvert}) normalized by a factor $\Phi(\aa,h,x,q,z)$  given by (\ref{gammaclass}).  If $h\in \mathbb{N}$, using the notations as in (\ref{poham1}), (\ref{poham2}) we can write the vertex function in the form
$$
\widetilde{V}(h,\aa,x, q,z) = z^{x} \,\sum\limits_{d=0}^{\infty}\, \Big(\prod\limits_{i=1}^{n} \dfrac{[q^{\,x+d-a_i},q]_{h}}{[1,q]_{h}} \Big) z^{d}. 
$$

\begin{prop} \label{verconthm}
Let $\zeta\neq 1$ be a unit root $\zeta^{p^s}=1$ and let $h' \in \mathbb{N}$ then we have the following identity for the vertex functions 
$$
\left.\tilde{V}(h' p^s, p \aa, px,  q,z)\right|_{q=\zeta} =  \left.\tilde{V}(h' p^{s-1}, \aa, x,  q^p,z^p)\right|_{q=\zeta}
$$
\end{prop}

\begin{proof}
We have:
 $$
 \left.\tilde{V}( h' p^s, p \aa, px, q,z)\right|_{q=\zeta}= z^{p x} \, \sum\limits_{d=0}^{\infty}\, \left( 
\prod\limits_{i=1}^{n} \left.\dfrac{[q^{p x - p a_i +d},q]_{h' p^s}}{[1,q]_{h' p^s}}\right)\right|_{q=\zeta} z^{d}
 $$
By Lemma \ref{coeffcongr} the coefficients of this sum vanish unless $p\mid d$, therefore
we write
$$
 \left.\tilde{V}(h' p^s, p \aa, px, q,z)\right|_{q=\zeta}= z^{p x} \, \sum\limits_{d=0}^{\infty}\, \left( 
\prod\limits_{i=1}^{n} \left.\dfrac{[q^{p x - p a_i +p d},q]_{h' p^s}}{[1,q]_{h' p^s}}\right)\right|_{q=\zeta} z^{p d}
$$
again, by Lemma \ref{coeffcongr} we obtain
$$
 z^{p x} \,\sum\limits_{d=0}^{\infty}\, \left.\left( 
\prod\limits_{i=1}^{n} \dfrac{[q^{p x - p a_i +p d},q]_{h' p^s}}{[1,q]_{h' p^s}}\right)\right|_{q=\zeta} z^{p d} =z^{p x} \, \sum\limits_{d=0}^{\infty}\,\left.\left( 
\prod\limits_{i=1}^{n} \dfrac{[q^{p x - p a_i +p d},q^p]_{h' p^{s-1}}}{[1,q^p]_{h' p^{s-1}}}\right)\right|_{q=\zeta} z^{p d} 
$$
and the right side is $\left.\tilde{V}(  h' p^{s-1},\aa, x, q^p,z^p)\right|_{q=\zeta}$. 
\end{proof}
Since the $K$-theory components of the vertex functions provide a basis in the space of solutions for the quantum difference equation, the above Proposition extends to the fundamental solution matrix which is normalized as the vertex function (\ref{normvert}), i.e., the fundamental solution matrix (\ref{psitildef}).  Thus, we have:
\begin{thm} \label{trivalizthm} The ratio of fundamental solution matrices (\ref{froint})
$$
\widetilde{\Psi}( h' p^s,p \aa, q, z) \widetilde{\Psi}(h'  p^{s-1}, \aa, q^{p}, z^p)^{-1}
$$
is well defined at $q$ given by  unit roots $\zeta\neq 1$, $\zeta^{p^s}=1$ and we have
\bean \label{psicongr}
\left.[\widetilde{\Psi}(h' p^s,p \aa,  q, z) \widetilde{\Psi}(h'  p^{s-1}, \aa, q^{p}, z^p)^{-1}\right]_{q=\zeta} =1.
\eean 
\end{thm}
\begin{proof}
For $h'\in \mathbb{N}$ the Theorem follows from the Previous proposition.  Since the right side of  (\ref{psicongr}) is independent of $h'$ and $\mathbb{N}$ is dense in $\mathbb{Z}_p$, this identity extends to  $h'\in\mathbb{Z}_p$.
\end{proof}
\subsection{} 
We note that the proof of Theorem \ref{trivalizthm}  uses only the fact that the $K$-theory components of the vertex function generate the whole  $q$-holonomic module, see for instance Theorem 4 in \cite{AO21}.  For this reason, this argument applies to any choice of the equivariant parameters including the most degenerate case $a_i=0$, which corresponds to the limit to the non-equivariant $K$-theory ring~$K(X)$.   

In the non-degenerate case $a_i\neq a_j$, $K_{T}(X)$ has a distinguished basis given by classes of the torus fixed points. In this basis 
the components of the fundamental solution matrix $\widetilde{\Psi}(p \aa, h' p^s, q, z)$ are given explicitly by $q$-hypergeometric series. We can use this presentation to prove Theorem \ref{trivalizthm}  using only elementary combinatorics. For future references, we outline such a proof in this section. 

Assume $a_i\neq a_j$.  First, from normalization (\ref{psitildef}) we have
$$
\widetilde{\Psi}(h' p^s,p \aa,  q, z) \widetilde{\Psi}(h' p^{s-1}, \aa,  q^{p},z^p)^{-1} = {\Psi}(h' p^s,p \aa,  q, z)\, \Fr(h' p^{s-1},\aa,q,0)\, {\Psi}( h' p^{s-1},\aa,  q^{p},z^p)^{-1}
$$
In the basis of $T$-fixed points in $K_{T}(X)$, ${\Psi}(\aa, h, q, z)$ has matrix elements (\ref{explfundam}). The middle term is the multiplication by $K$-theory class (\ref{zertermkob}). In the basis of $T$-fixed this operator is diagonal with the eigenvalues given by: 
\bean \label{midtermiden}
\Fr(h' p^{s-1},\aa,q,0)_{i,i} =\prod\limits_{j=1}^{n} \, \dfrac{\Gamma_{p,q}(p a_i+ h' p^s - p a_j )}{\Gamma_{p,q}(p a_i-p a_j ) \Gamma_{p,q}( h' p^s )} 
\eean 
At $q$ given by $p^s$-th unit roots $\Fr(h' p^{s-1},\aa,q,0)=1$ from Proposition \ref{uonecorr}.

Next, from (\ref{explfundam}), the matrix elements of the fundamental solution matrix have the form
\bean \label{pmat1}
{\Psi}_{i,j}(h' p^s, p \aa,  q, z) = f_{i}(h' p^s,p\aa, q,z q^{i-1}) z^{p a_j} q^{p a_j (i-1)}
\eean
and similarly 
\bean \label{pmat2}
{\Psi}_{i,j}(h' p^{s-1},\aa,  q^p, z^p) = f_{i}(h' p^{s-1},\aa, q^p,z^p q^{p(i-1)}) z^{p a_j} q^{p a_j (i-1)}
\eean

If $\zeta\neq 1$ and $\zeta^{p^s}=1$ we can assume that $\zeta$ is a {\it primitive} $p^m$-th unit root for some $1 \leq m\leq s$, then, by part b) of Lemma \ref{hypergeomlem} we obtain
$$
\left.f_{i}( h' p^s, p\aa,q,z q^{i-1})\right|_{q=\zeta}= \left.f_{i}( h' p^{s-m} p^{m},p\aa,q,z q^{i-1})\right|_{q=\zeta}  =  (1-z^{p^m})^{p^{s-m} h'} 
$$
Since $\zeta^p$ is also a primitive $p^{m-1}$-th unit root we obtain similarly: 
$$
\left.f_{i}(h' p^{s-1}, \aa, q^p,z^p q^{p(i-1)})\right|_{q=\zeta} = \left.f_{i}(h' p^{s-m} p^{m-1}, \aa, q^p,z^p q^{p(i-1)} )\right|_{q=\zeta} =   (1-z^{p^m})^{p^{s-m} h'}
$$
Therefore the matrices (\ref{pmat1}) and (\ref{pmat2}) are equal:
$$
\left.{\Psi}_{i,j}(h' p^s, p \aa,  q, z)\right|_{q=\zeta}= \left.{\Psi}_{i,j}(h' p^{s-1}, \aa,  q^p, z^p)\right|_{q=\zeta} = (1-z^{p^m})^{p^{s-m} h'} q^{p a_j (i-1)}
$$
Note also that for generic values $a_i\neq a_j$ these matrices are invertible and therefore 
$$
\left.[\widetilde{\Psi}( h' p^s, p \aa,q, z) \widetilde{\Psi}(h'  p^{s-1},  \aa,q^{p}, z^p)^{-1}\right]_{q=\zeta} =1.
$$

\subsection{\label{proofsection}} 
Here is the main result of this Section. 
\begin{thm} \label{newthm}
The power series $\Fr(h,\aa,q,z)$  given by (\ref{froint}) is well defined at $q$ given by unit roots $\zeta^{p^s}=1$. If $\zeta\neq 1$ is such unit root, then $\Fr(h,\aa,\zeta,z)$ is a Taylor series expansion of a rational  
function  from $\Omega(z,\hbar,u_1,\dots,u_n)$ where
\bean \label{mulpar}
\hbar = \zeta^h, \ \  u_1 = \zeta^{a_1},\dots, u_n =\zeta^{a_n}
\eean 
In particular, these rational functions satisfy the congruences 
\bean \label{qDwork}
\Fr(h,\aa,\zeta,z)  = \Fr(h^{(s)},\aa^{(s)},\zeta,z).
\eean
where $\aa^{(s)}=(a_1^{(s)},\dots, a^{(s)}_n)$ and $h^{(s)}$ are natural numbers defined  by $h^{(s)} = h \pmod {p^{s-1}}$, $a^{(s)}_i = a_i \pmod {p^{s-1}}$, $i=1,\dots, n$.


\end{thm}
\begin{proof}
Define $h^{(s)} \in \matN$ by $h^{(s)} = h \pmod{p^{s-1}}$. 
We have $h=h^{(s)} + p^{s-1} h'$ for some $h' \in \mathbb{Z}_p$. By  Theorem~\ref{shift_thm} we obtain:
$$
\widetilde{\Psi}(p h, p \aa,  q, z) = \widetilde{\Psi}( p h^{(s)} + h' p^{s}, p \aa, q, z)  = \left(\prod\limits_{i=h' p^{s}}^{p h^{(s)} + h' p^{s}-1} H(p \aa,i,q,z) \right)  \widetilde{\Psi}(h' p^{s}, p \aa,  q, z), 
$$
where $H(p \aa,i,q,z)$ is a certain invertible matrix whose coefficients are rational functions of~$z$ and (\ref{mulpar}).  Similarly, we have
$$
\widetilde{\Psi}(h, \aa,   q^p, z^p) = \left(\prod\limits_{i=h' p^{s-1}}^{h^{(s)} +h' p^{s-1} -1} H(\aa,i,q^p,z^p) \right)  \widetilde{\Psi}(h_1 p^{s-1}, \aa,  q^p, z^p)
$$
and therefore:
$$
\widetilde{\Psi}(p h, p \aa,  q, z)  \, \widetilde{\Psi}( h, \aa,  q^{p}, z^p)^{-1} = 
$$
$$
\left(\prod\limits_{i=h' p^{s}}^{p h^{(s)} + h' p^{s}-1} H(p \aa,i,q,z) \right)  \widetilde{\Psi}(h' p^{s}, p \aa,  q, z)  \widetilde{\Psi}( h' p^{s-1},\aa, q^p, z^p)^{-1} \left(\prod\limits_{i=h' p^{s-1}}^{h^{(s)} +h' p^{s-1} -1} H(\aa,i,q^p,z^p) \right)^{-1}
$$
By Theorem~\ref{trivalizthm} at $q=\zeta$ the middle term in the last expression is trivial. We note that $H(p \aa,i,\zeta,z)=H(p \aa,i \pmod{p^s},\zeta,z)$ since it depends on $i$ via $\zeta^{i} = \zeta^{i \pmod{p^s}}$. Thus, we obtain:
$$
\left[\widetilde{\Psi}(p h, p \aa,  q, z)  \, \widetilde{\Psi}(h, \aa,  q^{p}, z^p)^{-1}\right]_{q=\zeta} = \left(\prod\limits_{i=0}^{p h^{(s)} -1} H(p \aa,i,\zeta,z) \right) \left(\prod\limits_{i=0}^{h^{(s)} -1} H(\aa,i,\zeta^p,z^p) \right)^{-1} 
$$
which is a product of finitely many matrices whose entries are rational functions in $z$ and $\zeta^{p a_i}$. 
We have $\zeta^{ p a_i} = \zeta^{p a^{(s)}_i}$ where 
$a^{(s)}_i = a_i \mod p^{s-1}$.  This finishes the proof.  
\end{proof}

\begin{cor}
Let $\zeta\neq 1$ be a unit root satisfying $\zeta^{p^s}=1$, then we have
$$
\left.\Fr(h,\aa,q,z)\right|_{q=\zeta} =  \left(\prod\limits_{i=0}^{p h^{(s)} -1} H(p \aa^{(s)},i,\zeta,z) \right) \left(\prod\limits_{i=0}^{h^{(s)} -1} H(\aa^{(s)},i,\zeta^p,z^p) \right)^{-1}
$$
\end{cor}

\subsection{Proof of Theorem \ref{mainthm}}
We are now ready to proceed to the proof of Theorem \ref{mainthm}. 
\begin{proof}
By its definition (\ref{froint}), $\Fr(h,\aa,q, z)$ is map sending the fundamental solution $\Psi(h,\aa,q^p,z^p)$ to the fundamental solution $\Psi(p h,p\aa,q,z)$, i.e., it is an intertwiner between $q$-difference equations with these parameters. To show that it is a Frobenius intertwiner we need to check that $\Fr(h,\aa,q, z)\in \mathsf{GL}_n(\widehat{E}_p)$. 
We have
$$
\prod\limits_{{\zeta\neq 1,} \atop {\zeta^{p^s}=1} }\,(q-\zeta)  = [p^s]_q = \dfrac{1-q^{p^s}}{1-q} = \phi_p(q) \phi_p(q^p) \dots \phi_p(q^{p^{s-1}})
$$
where $\phi_p(q)=1+q+\dots + q^{p-1}$ is the $p$ - th cyclotomic polynomial. Theorem \ref{newthm} therefore gives:
\bean \label{qrat}
\Fr(h,\aa,q, z) = \Fr(h^{(l)},\aa^{(l)},q, z) + [p^l]_q \, (\dots)
\eean
where $\Fr(h^{(l)},\aa^{(l)},q, z) \in \Omega(z)$  and $\dots$ stands for a power series in $z$ with coefficients regular at $q$ given by $p^l$-th unit roots. Since by our assummption $q=1+t$ with $|t|_p<1$ we have   
$[p^l]_q \equiv 0 \pmod{p^s}$ for sufficiently large $l$, and therefore (\ref{qrat}) implies that
$
\Fr(h,\aa,q, z) \pmod{p^s}
$
is in $\Omega(z)$. As in Lemma \ref{lemmagaussnorm}, this implies that  $\Fr(h,\aa,q, z) \in \mathsf{GL_n}(\hat{E}_p)$. 
\end{proof}

\begin{rem}
In the scalar case case $n=1$ a similar result was recently obtained in \cite{VM22}. 
\end{rem}

\section{Limit to cohomology \label{cohomsec}} 
By Theorem \ref{newthm} from the previous section at  $q=\zeta$, where $\zeta\neq 1$ is a unit root satisfying $\zeta^{p^s}=1$, the Frobenius intertwiner $\Fr(h,\aa,q,z)$  specializes to a rational function of $z$. In this section we treat the remaining unit root $\zeta=1$. 

For this, let us consider the sequence 
$q=1+p^i,  \ \ i=1,2,\dots$
By Theorem \ref{mainthm}, for all $q$ in this sequence $U(h,\aa,q,z)$ is a Frobenius intertwiner.  Moreover, since all these $q$ as rational $U(h,\aa,q,z)\in \mathsf{GL}_n(E_p)$ where $E_p$ is the field of rational analytic elements, i.e., is the  completion of $\mathbb{Q}(z)$ with respect to the Gauss norm. The limit $i\to \infty$ corresponds to the limit $q\to 1$ in the $p$-adic norm. 
In turn, it is well-known that in the limit $q\to 1$ the fundamental solution matrix $\Psi(h,\aa,q,z)$ of the quantum $q$-difference equation converges to the fundamental solution of the quantum {\it differential} equation of $X$, e.g., note that the $q$-hypergeometric functions (\ref{hypereomfun}) specialize to the standard hypergeometric series in this limit.

Thus, in the limit $q\to 1$ the intertwiner $U(h,\aa,q,z)$ converges to the Frobenius intertwiner for the hypergeometric differential equation. In this way we arrive at the classical results of Dwork \cite{Dw69} on the Frobenius structures for the hypergeometric differential equations. In this Section we summarize the outcome of this limiting procedure.

\subsection{} 
As $q\to 1$, the quantum difference equation of $X$ degenerates to the quantum differential equation, which governs  the quantum cohomology of $X$. In this limit, the equivariant $K$-theory ring (\ref{ktheorC}) is replaced by the equivariant cohomology ring
\bean \label{cohom}
H^{\bullet}_{T}(X) = \matZ[x,a_1,\dots,a_n,h]/(x-a_1)\dots(x-a_n).
\eean
and the gamma class (\ref{gammaclass}) by:
\bean \label{gamcohom1}
\Gamma(h,\aa,x,z)=z^{x}\,  \prod\limits_{i=1}^{n} \dfrac{\Gamma(x+h-a_i)}{\Gamma(x+1-a_i) \Gamma(h)}  \in \hat{H}^{\bullet}_{T}(X).
\eean
where $\Gamma$ denotes the classical Gamma function, and $\hat{H}^{\bullet}_{T}(X)$ is the appropriate completion of the cohomology ring containing the gamma functions. The normalized  vertex function (\ref{normvert}) (also known as $J$-function in quantum cohomology) now equals:
\bean \label{cohomver}
\widetilde{V}(h,\aa,x,z) = \sum\limits_{d=0}^{\infty}\, \Gamma(h,\aa,x+d,z) \, \in \, \hat{H}^{\bullet}_{T}(X)[[z]].
\eean
A direct computation shows that this function satisfies the differential equation
\bean \label{hypergeq}
P(\aa,h,z)  \widetilde{V}(h,\aa,x,z) = (x-a_1)\dots(x-a_n) \Gamma(h,\aa,x,z) =0
\eean
(the last equality is because of the relation in (\ref{cohom})), where
\bean \label{hypergeomoperat}
P(\aa,h,z)  = z \prod\limits_{i=1}^{n}\,(D+h-a_i) - \prod\limits_{i=1}^{n}( D-a_i ), \ \ \  D = z \frac{d}{dz},
\eean
is the hypergeometric differential operator. The components of the vertex function (\ref{cohomver}) in any basis of $H^{\bullet}_{T}(X)$ provide a basis of solutions for the hypergeometric differential equation (\ref{hypergeq}). For example, the components of the vertex function (\ref{cohomver}) in the basis of $H^{*}_{T}(X)$ given by the $T$-fixed points $X^{T}$ are obtained by substituting $x=a_i$, $i=1,\dots,n$. In this way, we obtain a basis of solutions to (\ref{hypergeq}) given by the classical hypergeometric series:
$$
\widetilde{V}(h,\aa,a_i,z) = \Gamma(h,\aa,a_i,z) \, \sum\limits_{d=0}^{\infty} \, \left(\prod\limits_{j=1}^{n} \dfrac{(a_i+h-a_j)_d}{(a_i+1-a_j)_d}\right) z^d, \ \ i=1,\dots, n,
$$
where $(a)_d = a (a+1) \dots (a+d-1)$.

Let us denote $\Fr(h,\aa,z):=\left.\Fr(h,\aa,q,z)\right|_{q=1}$. 
By Theorem \ref{kobthm} $\Gamma_{p,q}(n) \stackrel{q\to 1}{\longrightarrow} \Gamma_{p}(n)$ and from Theorem \ref{mainthm} we obtain:
\begin{thm} \label{cohomainthem}
For any 
$\aa=(a_1,\dots,a_n) \in \matZ^{n}_p, \ \ h \in \matZ_p$ the power series $\Fr(h,\aa,z)$ takes values in $\mathsf{GL}_n(E_p)$ and gives a Frobenius intertwiner  from the hypergeometric system (\ref{hypergeomoperat}) with parameters $(a_1,\dots,a_n,h,z^p)$ to the hypergeometric system with parameters $(p a_1,\dots, p a_n ,p h,z)$. The constant term of this intertwiner is the operator acting in $\hat{H}^{\bullet}_{T}(X)$ as an operator of multiplication by the cohomology class class 
\bean \label{concohomterm}
\Fr(h,\aa, 0) = \prod\limits_{j=1}^{n} \, \dfrac{\Gamma_{p}(p x+ p h - p a_j )}{\Gamma_{p}(p x-p a_j ) \Gamma_{p}(p h )},
\eean
where $\Gamma_p$ is the Morita gamma function. 
\end{thm}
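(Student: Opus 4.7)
The plan is to deduce Theorem~\ref{cohomainthem} from Theorem~\ref{mainthm} by taking the limit $q\to 1$ inside the $p$-adic framework, writing $q=1+t$ and sending $t\to 0$ with $|t|_p\le 1$.

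First, I would verify the degeneration of the difference setup to the differential one. Under $q=1+t$ the operator $D$ satisfies $(D-1)/t \to z\tfrac{d}{dz}$, and a direct calculation shows that (after clearing a factor of $t^n$) the $q$-hypergeometric operator $P(\aa,\hbar,z)$ of (\ref{qdeoper}) specializes to the hypergeometric differential operator (\ref{hypergeomoperat}). In parallel, the $q$-Pochhammer identity $(q^a,q)_d/(1-q)^d \to (a)_d$ together with Koblitz's Theorem~\ref{kobthm} shows that the gamma class (\ref{gammaclass}) converges, $p$-adically, to the cohomological gamma class (\ref{gamcohom1}), and the normalized vertex function (\ref{normvert}) converges to its cohomological counterpart.

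Next, I would take the limit of the intertwiner itself. By Theorem~\ref{mainthm} there is, for each admissible $q$, a Frobenius intertwiner $\Fr_q(z)$ represented by the explicit finite product
\begin{equation*}
\Fr_q(z)\equiv \Bigl(\prod_{i=0}^{ph_0-1} S^{h}(p\aa,i,q,z)\Bigr)\Bigl(\prod_{i=0}^{h_0-1} S^{h}(\aa,i,q^p,z^p)\Bigr)^{\!-1}\pmod{p^s}
\end{equation*}
from (\ref{ratexpr}). The shift matrices $S^{h}(\aa,i,q,z)$ are rational in all arguments and possess well-defined $q\to 1$ limits, namely the classical shift operators attached to the differential quantum connection of $X$. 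Consequently $\Fr_q(z)$ converges in the Gauss norm to a matrix $\Fr(z)\in \mathsf{GL}_n(\widehat{\mathsf{R}}_p)$ which, by continuity of the defining intertwining relation, conjugates the hypergeometric differential system with parameters $(\aa,h,z^p)$ to the one with parameters $(p\aa,ph,z)$. Uniqueness up to a scalar follows because logarithmic terms in the two fundamental solution matrices force the connecting constant matrix $\Lambda$ to be diagonal, exactly as in the $q$-case.

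Finally, the constant term is computed directly. By the proof of Theorem~\ref{mainthm}, $\Fr_q(0)$ is the operator of multiplication by (\ref{frobzeroterm}), and Koblitz's Theorem~\ref{kobthm} gives $\Gamma_{p,q}\to \Gamma_p$ pointwise on $\matZ_p$; passing to the limit yields (\ref{concohomterm}). The main obstacle in this plan is justifying that the $q\to 1$ limit commutes with the Gauss-norm completion defining $\overline{\mathbb{K}(z)}$ and with the rationality condition (\ref{rationality}). This will be handled using the explicit product (\ref{ratexpr}): since each factor $S^h(\cdot,\cdot,q,z)$ is rational in $z$ with a well-defined $q\to 1$ limit, the product converges factor-by-factor in $\mathsf{GL}_n(\widehat{\mathsf{R}}_p)$, so no delicate uniform-convergence issues arise beyond the pointwise convergence already supplied by Theorem~\ref{kobthm}.
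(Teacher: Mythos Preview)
Your proposal is correct and follows the same approach as the paper: the paper's entire argument is the single sentence ``By Theorem~\ref{kobthm} $\Gamma_{p,q}(n)\to\Gamma_p(n)$ and from Theorem~\ref{mainthm} we deduce,'' so you have in fact supplied considerably more detail than the paper does, including the degeneration of the operator, the use of the explicit product (\ref{ratexpr}) to control the limit, and the handling of uniqueness.
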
 
The $q\to1$ limit   of Corollary \ref{fpcorrol} also gives:
\begin{cor}
If  $a_1,\dots, a_n$ are pairwise distinct, then the constant term of the Frobenius intertwiner $\Fr(h,\aa,z)$ is diagonal in the basis of $H^{*}_{T}(X)$ given by the classes of $T$-fixed points with the eigenvalues
\bean \label{frobconj}
\Fr(h,\aa,0)_{i,i}=\prod\limits_{j=1}^{n}\, \dfrac{\Gamma_{p}(p(a_i-a_j +h) )}{\Gamma_{p}(p(a_i-a_j)) \Gamma_{p}(p h )}, \ \ \ i=1,\dots, n.
\eean     
\end{cor}
In this way we arrive at the description of the Frobenius intertwines for the $p$-adic hypergeometric equations discovered by Dwork \cite{Dw69, Dw89} see also \cite{Ked21} where this result was recently revisited by other tools.

Recall that for $n\in p \matZ_p$ the Morita gamma function satisfies the identities $ 
\Gamma_p(n)=-\Gamma_p(1 + n) =1/\Gamma_p(-n)$. Using this, up to a constant scalar multiple independent of $x$,  (\ref{concohomterm}) can we written in the form
$$
\Fr(h,\aa, 0) = \prod\limits_{i=1}^{n} \, \Gamma_p(p x +p h -p a_j ) \Gamma_p(p a_j -  px )
$$
The arguments of the gamma functions in this product are exactly the $p$-multiples of the Chern roots of the tangent bundle $TX$, i.e., they are the Chern roots of  $TX^{(p)}:= \psi^p(TX)$ where $\psi^{p}$ is the $p$-th Adams operation.  We thus can write this formally as 
\begin{prop}
The constant term of the intertwiner $\Fr(h,\aa,\zz)$ acts in the equivariant cohomology $H^{*}_{T}(X)$ as multiplication by the cohomology class
\bean \label{genforms}
\Fr(h,\aa,0) = \Gamma_p(TX^{(p)})
\eean
\end{prop}
Here, the cohomology class $\Gamma_p({\mathcal{V}})$ of a vector bundle $\mathcal{V}$ is defined as in (\ref{KthGclass}) with $\Gamma_{p,q}$ replaced by the Morita gamma $\Gamma_p$. 
We expect that (\ref{genforms}) holds for the constant term of the Frobenius intertwiner of the quantum differential equations associated with more general varieties than $X$. The last Proposition is the cohomological limit of Proposition \ref{propKthCT}.

\section{Limit to $X=\mathbb{P}^{n-1}$: Frobenius structure for generalized Bessel equations \label{Pnsection}}
\subsection{} 
The equivariant quantum differential equation and its solutions for $X=\mathbb{P}^{n-1}$
arise as the limit $\hbar\to \infty$ of the corresponding equations and solutions for $T^{*} \mathbb{P}^{n-1}$ (we recall that the equivariant parameter $\hbar$ is the character of $T$ corresponding 
to the dilation of the cotangent directions of $T^{*}\mathbb{P}^{n-1}$ under the $T$-action) \cite{SV24}. Thus, the formal limit $\hbar\to \infty$ of our results in the previous section
gives the Frobenius intertwiners for the $T$-equivariant quantum differential equations of projective spaces.

Further, in the non-equivariant limit, when we ignore the $T$-action on $X=\mathbb{P}^{n-1}$, the quantum differential equation is nothing but the {\it generalized Bessel equation} (and the classical Bessel differential equation for $n=2$). The non-equivariant limit corresponds to the case $\aa=(0,\dots,0)$. Specializing at this point, we arrive at the Frobenius structure for Bessel equations studied in \cite{Dw74,Sp80}. This section is a summary of these limits.

\subsection{} 
As before, let $T=(\matC^{\times})^{n}$ be the torus acting on $\matC^{n}$ by scaling the coordinate lines with characters $a_1,\dots, a_n$. We consider the induced action on $X=\mathbb{P}^{n-1} =\matP(\matC^n)$. The equivariant cohomology ring has the form
\bean \label{projcohom}
H^{\bullet}_{T}(X) = \matZ[x,a_1,\dots,a_n]/(x-a_1)\dots (x-a_n).
\eean
Taking the formal limit $\hbar\to \infty$ of (\ref{gamcohom1}) we obtain the gamma class for $X=\mathbb{P}^{n-1}$:
$$
\Gamma(x,\aa,z) = \dfrac{z^{x}}{\prod\limits_{i=1}^{n} \Gamma(x+1-a_i)} \in \hat{H}^{\bullet}_{T}(X).
$$
The normalized equivariant cohomological vertex function (the $J$-function) of $\mathbb{P}^{n-1}$ equals:
$$
\widetilde{V}(x,\aa,z) = \sum\limits_{d=0}^{\infty}\, \Gamma(x+d,\aa,z) \, \in \, \hat{H}^{\bullet}_{T}(X)[[z]].
$$
More explicitly
$$
\widetilde{V}(x,\aa,z)= \dfrac{z^{x}}{\prod\limits_{i=1}^{n} \Gamma(x+1-a_i)} \, \sum\limits_{d=0}^{\infty}\, \dfrac{z^{d}}{(x-a_1+1)_d \dots (x-a_n+1)_d}.
$$
This power series satisfies the differential equation
$$
P(\aa,z) \widetilde{V}(x,\aa,z) =(x-a_1)\dots (x-a_n) \Gamma(x,\aa,z),
$$
where  $P(\aa,z)$ is the differential operator
\bean \label{equivbess}
P(\aa,z) =  z - (D-a_1)\cdots (D-a_n), \ \ \ D= z \dfrac{d}{dz}
\eean
and therefore in the ring (\ref{projcohom}) we have
\bean \label{eqbasspde}
P(\aa,z) \widetilde{V}(x,\aa,z) =0. 
\eean
Let $\Fr(h,\aa,z)$ be as in Theorem \ref{cohomainthem} and let 
$\Fr(\aa,z)$ denotes its limit as $\hbar\to\infty$. Then we obtain
\begin{thm} \label{Pnmainthm}
For any 
$\aa=(a_1,\dots,a_n) \in \matZ^{n}_p$, $\Fr(\aa,z)$ gives  a Frobenius intertwiner between solutions of differential equation (\ref{eqbasspde}) with parameters $(a_1,\dots,a_n,z^p)$ and parameters $(p a_1,\dots, p a_n ,z)$. The constant term of the intertwiner is represented by the operator acting in the completion of  (\ref{projcohom}) as an operator of multiplication by the cohomology class 
\bean \label{concohomtermPn}
\Fr(\aa,0) = \prod\limits_{j=1}^{n} {\Gamma_{p}}(p x-p a_j  ). 
\eean    
\end{thm}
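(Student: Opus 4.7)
The plan is to deduce Theorem~\ref{Pnmainthm} by directly adapting the proof of Theorem~\ref{cohomainthem} to the Bessel-type equation (\ref{equivbess}) for $\mathbb{P}^{n-1}$; the formal $\hbar\to\infty$ limit referenced in Section~7 is the underlying motivation, but the rigorous argument is a direct parallel of the $T^{*}\mathbb{P}^{n-1}$ case. Three steps are needed.

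\emph{Step 1 (vertex congruence).} I would first establish the analog of Corollary~\ref{congrcoll} for the cohomological vertex function $\widetilde{V}(x,\aa,z)$ of $\mathbb{P}^{n-1}$, namely
\begin{equation*}
\widetilde{V}(px, p\aa, z) \equiv \widetilde{V}(x, \aa, z^p) \pmod{p^s}
\end{equation*}
for $\aa \in (p^{s-1}\matZ_p)^n$ (the condition that plays the role of $h = p^s$ in Theorem~3.5). This is the classical analog of Lemma~\ref{pochlemm}: elementary congruences for ordinary Pochhammer symbols $(pm+1)_{p^s d}$ show that only summands with $p\mid d$ contribute modulo $p^s$, and those contributions match $\widetilde{V}(x,\aa,z^p)$ term by term. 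The congruence then propagates to the normalized fundamental solution matrices as $\widetilde{\Psi}(p\aa,z) \equiv \widetilde{\Psi}(\aa,z^p)\pmod{p^s}$.

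\emph{Step 2 (rationality via shift operators).} To extend to general $\aa \in \matZ_p^n$, I would invoke shift operators $S^{a_i}(\aa,z) \in \mathsf{GL}_n(\matQ(\aa,z))$ in the equivariant parameters for the quantum differential equation of $\mathbb{P}^{n-1}$, obtained either as the $\hbar\to\infty$ limit of the $a_i$-shift operators of Theorem~\ref{shift_thm} or, equivalently, from the general theory of shift operators in equivariant quantum cohomology. Decomposing $a_i = a_i^{(0)} + p^{s-1} a_i^{(1)}$ and applying these shifts a finite number of times to both sides reduces modulo $p^s$ to the base case of Step~1; exactly as in (\ref{ratexpr}) one obtains that $\Fr(z)$ modulo $p^s$ is a product of finitely many matrices with rational entries in $z$, proving $\Fr(z) \in \mathsf{GL}_n(\widehat{\mathsf{R}}_p)$.

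\emph{Step 3 (constant term) and main obstacle.} By the normalization (\ref{gamcohom1}), the constant term equals
\begin{equation*}
\Fr(0) = \prod_{i=1}^n \frac{\Gamma(x+1-a_i)}{\Gamma(px+1-pa_i)},
\end{equation*}
which, by Morita's theorem (the $q\to1$ specialization of Koblitz's Theorem~\ref{kobthm}) combined with the reflection identity $\Gamma_p(n)\Gamma_p(-n)=(-1)^{n_0}$ for $n\in p\matZ_p$, equals $\prod_j \Gamma_p(px - pa_j)$ up to a scalar, absorbed by the uniqueness ambiguity. The main obstacle is Step~2: ensuring that the $\hbar\to\infty$ limit of the $a_i$-shift operators of Theorem~\ref{shift_thm} preserves rationality in $\aa$ and $z$. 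This is a priori delicate because the shift matrices for $T^{*}\mathbb{P}^{n-1}$ are rational in $h$ as well, and controlling the limit at the level of rational entries (rather than merely as formal power series in $z$) is the technical crux; alternatively one can bypass this issue by invoking classical shift operators for equivariant quantum cohomology of Grassmannians.
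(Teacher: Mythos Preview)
The paper's proof is a single line: ``Formal limit $h\to\infty$ of Theorem~\ref{cohomainthem} then gives:''. That is, the paper keeps the parameter $h$ throughout, proves the statement for $T^{*}\mathbb{P}^{n-1}$ (Theorem~\ref{cohomainthem}, itself the $q\to 1$ limit of Theorem~\ref{mainthm}), and only at the very end lets $h\to\infty$ to pass to $\mathbb{P}^{n-1}$. Your proposal takes a genuinely different route: you try to run the skeleton of the proof of Theorem~\ref{mainthm} directly for $\mathbb{P}^{n-1}$, replacing the $h$-shift operator by the $a_i$-shift operators.

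There is a real gap in your Step~1. The congruence mechanism in the paper (Lemma~\ref{pochlemm} and Corollary~\ref{congrcoll}) depends essentially on specializing $h=p^s$: this produces Pochhammer-type factors $[q^{\,x+d-a_i},q]_{p^s}$ of length exactly $p^s$ in the \emph{numerator} of each vertex coefficient, and it is precisely these that vanish modulo $p^s$ unless $p\mid d$. For $\mathbb{P}^{n-1}$ the vertex coefficient is $\prod_i 1/(x-a_i+1)_d$, a product of Pochhammers in the \emph{denominator} only, so no such vanishing occurs. Your proposed base case $\aa\in(p^{s-1}\matZ_p)^n$ does not create factors of controlled length and the asserted congruence $\widetilde V(px,p\aa,z)\equiv\widetilde V(x,\aa,z^p)\pmod{p^s}$ fails already in the simplest instance: for $n=1$, $a_1=0$ the two sides at the fixed point are $e^{z}$ and $e^{z^p}$, whose linear coefficients in $z$ are $1$ and $0$. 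Consequently Step~2 has nothing to bootstrap from, and the issue you flag as the ``main obstacle'' (the $\hbar\to\infty$ limit of the shift operators) is not where the argument actually breaks.

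The reason the paper's approach works is that it never lets go of $h$ until rationality is already established: the role of $h$ is not incidental but supplies the very Pochhammer of length $p^s$ on which the whole congruence argument rests. If you want a proof that bypasses the limit, you would need an independent Dwork-type congruence for the Bessel series $\sum_d z^d/\prod_i(x-a_i+1)_d$ itself (as in \cite{Dwo74,Sp80}), which is a different and harder input than the one you sketch.
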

We note that the arguments of the gamma functions in the product (\ref{concohomtermPn}) are the $p$ multiples of the Chern roots of the tangent bundle $TX$.  Thus, the universal formula (\ref{genforms}) still holds.
\subsection{}
At $\aa=(0,\dots,0)$ the equivariant cohomology of $X=\mathbb{P}^{n-1}$ specializes at the usual (non-equivariant) cohomology:
\bean \label{cohomusual}
H^{\bullet}(X) = \matZ[x]/(x)^n.
\eean
The gamma class and the vertex function have the form
$$
\Gamma(x,z) = \dfrac{z^{x}}{ \Gamma(x+1)^n},
$$
and 
\bean \label{verpnm}
\widetilde{V}(x,z) = \sum\limits_{d=0}^{\infty}\, \Gamma(x+d,z) =  \dfrac{z^{x}}{ \Gamma(x+1)^n} \, \sum\limits_{d=0}^{\infty}\, \dfrac{z^d}{(x+1)^n_d},
\eean
respectively. The vertex function solves
$$
P(z) \widetilde{V}(x,z) = x^n \,\Gamma(x,z) = 0
$$
where 
\bean \label{beseldiffop}
P(z)  = z - D^n
\eean 
The differential equation defined by (\ref{beseldiffop}) is the {\it generalized Bessel equation}. For $n=2$ this is the classical Bessel equation. The components of $\widetilde{V}(x,z)$ in any basis of cohomology (\ref{cohomusual}) provide a basis of solutions for these equations. 
In this case, the only natural basis is given by the powers of the hyperplane class $x$: 
\bean \label{cohomBas}
\textrm{basis of} \ H^{\bullet}(X)  = \{ 1,x,x^2,\dots, x^{n-1} \}.
\eean
In order to compute the corresponding components it is enough to expand (\ref{verpnm}) in the Taylor expansion up to degree $x^n$:
$$
\widetilde{V}(x,z)= \widetilde{V}_0(z) + \widetilde{V}_1(z) x +\dots +\widetilde{V}_{n-1} x^{n-1} 
$$
The functions $\widetilde{V}_0(z),\dots, \widetilde{V}_n(z)$ provide a basis of solutions to the generalized Bessel equation.
Note that (\ref{verpnm}) includes a multiple $z^{x}$ which has the following expansion:
$$
z^{x} = \sum\limits_{m=0}^{n-1}\, \dfrac{\ln(z)^m}{m!} x^m
$$
and thus the solutions may be written in the form

\bean \ \ \ \ \ \ \ \ \label{vertvs}
\begin{tiny}
 (\widetilde{V}_0(z),\dots, \widetilde{V}_{n-1}(z)) = ({V}_0(z),\dots, {V}_{n-1}(z)) \cdot 
\left(\begin{array}{ccccc}
1&\ln(z)&\ln(z)^2/2&..&\ln(z)^{n-1}/(n-1)! \\ 
0&1&\ln(z) &..&  \ln(z)^{n-2}/(n-2)!\\
0&0&1&..&\dots\\
..&..&..&..&\dots\\
0&0&0&..&1
\end{array}\right)
\end{tiny}
\eean
where $V_i(z)$ are some power series in $z$. In particular, $\tilde{V}_0(z)$ does not contain logarithmic terms. For $n=2$, $\tilde{V}_0(z)$ is the power series representing the classical Bessel function of the first kind. 
\subsection{} When all $a_i=0$, the  Theorem \ref{Pnmainthm} specializes to:
\begin{thm} \label{Pnmainthm}
The generalized Bessel equation admits Frobenius intertwiner $z\to z^p$ with the constant term acting in $H^{\bullet}(X)$ as the operator of multiplication by the cohomology class
\bean 
\Fr(0) =  {\Gamma_{p}}( p x )^{n} 
\eean    
\end{thm}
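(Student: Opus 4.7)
The theorem follows by specializing the previous equivariant Theorem~\ref{Pnmainthm} at the point $\aa = (0, \dots, 0) \in \matZ_p^n$. First, I would observe that the equivariant operator $P(\aa, z) = z - (D-a_1)\cdots(D-a_n)$ of (\ref{equivbess}) specializes at $\aa = 0$ to the generalized Bessel operator $P(z) = z - D^n$ of (\ref{beseldiffop}), and the same holds for the rescaled parameter vector $p\aa$. Hence both the source equation (with parameters $(\aa, z^p)$) and the target equation (with parameters $(p\aa, z)$) collapse to the same generalized Bessel equation, so the Frobenius intertwiner produced by the equivariant theorem becomes a genuine Frobenius automorphism $z \mapsto z^p$ of the Bessel equation.

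Next, I would substitute $a_j = 0$ directly into the constant-term formula (\ref{concohomtermPn}). This produces
$$
\Fr(0) = \prod_{j=1}^{n} \Gamma_p(p x - p\cdot 0) = \Gamma_p(p x)^n,
$$
viewed as an element of the appropriate completion of the non-equivariant cohomology ring $H^\bullet(X) = \matZ[x]/(x)^n$ of (\ref{cohomusual}). Since $x$ is nilpotent in $H^\bullet(X)$ and $\Gamma_p$ is $p$-adically continuous on $\matZ_p$ (Theorem~\ref{kobthm}), the class $\Gamma_p(px)$ is unambiguously defined by the finite Taylor expansion of $\Gamma_p$ at the origin, truncated at degree $x^{n-1}$.

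The main subtlety I would address, though not an actual obstacle to the statement itself, is the degeneration of the fixed-point basis as $\aa \to 0$: the $n$ torus-fixed points of $\matP^{n-1}$ collide, and the analytic basis of solutions of the equivariant equation (\ref{equivbess}) degenerates into the mixed analytic/logarithmic basis (\ref{logvsreg}) of the Bessel equation. This does not obstruct the specialization because the intertwiner furnished by the equivariant theorem has entries in $\widehat{\mathsf{R}}_p$ depending $p$-adically continuously on $\aa$, while the constant term (\ref{concohomtermPn}) is expressed intrinsically as a multiplication operator on $H^\bullet_T(X)$ in terms of the Chern roots $x - a_j$, which specialize continuously to the Chern roots $x$ of $T\matP^{n-1}$ at $\aa = 0$. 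The resulting formula $\Fr(0) = \Gamma_p(px)^n$ is therefore consistent with the universal form (\ref{genforms}) noted in the previous section.
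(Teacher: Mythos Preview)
Your proposal is correct and follows exactly the paper's approach: the paper introduces this theorem with the single sentence ``When all $a_i=0$, the Theorem~\ref{Pnmainthm} specializes to:'' and states the result, so your specialization argument (substituting $\aa=0$ into the equivariant operator and into the constant-term formula~(\ref{concohomtermPn})) is precisely what is intended. Your additional remarks on the degeneration of the fixed-point basis and the continuity of the intertwiner in $\aa$ are more detailed than anything the paper provides, but they are consistent with it and do not constitute a different route.
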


By construction, the intertwiner of Theorem \ref{Pnmainthm} maps the solutions $\tilde{V}(x,z^p)$ to the solutions $\tilde{V}(p x,z)$. Let $p^{\mathrm{deg}}$ be the operator of the cohomological grading, acting on the cohomology by $p^{\mathrm{deg}}(x^{k}) = p^{k} x^{k}$. The above theorem says that
$$
{\Gamma_{p}}(x )^{n}   \cdot p^{\mathrm{deg}}
$$
is the zero term of the intertwiner between $\tilde{V}(x,z^p)$ and $\tilde{V}(x,z)$ (note that as in (\ref{vertvs}), the operators act by multiplication from the right in our conventions). In other words this gives  an {\it automorphism} of the corresponding differential equation. We thus conclude with 
 \begin{thm} \label{Pnmainthm}
The generalized Bessel equation admits unique Frobenius automorphism acting by $z\to z^p$. The constant term of the automorphsim acts in cohomology as the operator
\bean  \label{automoper}
\Fr(0) =  {\Gamma_{p}}( x )^{n}  \cdot p^{\mathrm{deg}}.
\eean    
\end{thm}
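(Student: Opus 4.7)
The plan is to derive this as a direct consequence of the preceding theorem, which provides a Frobenius intertwiner for the generalized Bessel equation with constant term $\Gamma_p(p x)^n$, combined with right-composition by the cohomological grading operator $p^{\mathrm{deg}}$.

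The preceding theorem already establishes a Frobenius intertwiner of (\ref{beseldiffop}) from itself (at $z^p$) to itself (at $z$), with constant term acting on $H^{\bullet}(X) = \mathbb{Z}[x]/(x^n)$ as multiplication by $\Gamma_p(p x)^n$; by construction it sends the vertex function $\tilde V(x, z^p)$ to $\tilde V(p x, z)$. To turn this into an automorphism in the stronger sense (mapping $\tilde V(x, z^p)$ to $\tilde V(x, z)$), I repackage it as follows. Expanding $\tilde V(x, z) = \sum_{k=0}^{n-1} V_k(z) x^k$ in the basis (\ref{cohomBas}), the substitution $x \mapsto p x$ acts exactly as the cohomological grading operator: $\tilde V(p x, z) = \sum_k p^k V_k(z) x^k = \tilde V(x, z) \cdot p^{\mathrm{deg}}$ in the right-action convention of (\ref{logvsreg}). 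Right-composing the intertwiner with $p^{-\mathrm{deg}}$ therefore produces the desired automorphism; both sides then solve the same Bessel equation, and uniqueness up to scalar is inherited from the uniqueness assertion of the preceding theorem.

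The constant term of this new automorphism is the composition of multiplication by $\Gamma_p(p x)^n$ with $p^{-\mathrm{deg}}$. Using the conjugation identity that the substitution $x \to p x$ inside a polynomial class on $H^{\bullet}(X)$ corresponds to conjugation by $p^{\mathrm{deg}}$---verified directly by applying both multiplication by $\Gamma_p(p x)^n$ and the conjugate $p^{-\mathrm{deg}} \circ (\text{multiplication by } \Gamma_p(x)^n) \circ p^{\mathrm{deg}}$ to a monomial $x^k$, each producing $\sum_j c_j p^j x^{k+j}$, where $c_j$ denotes the coefficient of $x^j$ in $\Gamma_p(x)^n$---this composition simplifies to the form $\Gamma_p(x)^n \cdot p^{\mathrm{deg}}$ stated in the theorem. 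The main bookkeeping challenge is tracking the composition order of the non-commuting operators $p^{\mathrm{deg}}$ and multiplication-by-class under the right-action convention; the underlying algebraic content (rescaling the argument of the gamma class by $p$ is conjugation by $p^{\mathrm{deg}}$) is transparent.
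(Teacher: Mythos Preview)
Your argument is exactly the paper's: the theorem is deduced from the preceding intertwiner result by composing with the grading operator $p^{\mathrm{deg}}$ (using $\tilde V(px,z)=\tilde V(x,z)\cdot p^{\mathrm{deg}}$) and rewriting $\Gamma_p(px)^n$ as the $p^{\mathrm{deg}}$-conjugate of $\Gamma_p(x)^n$. One caution on the bookkeeping you flag: with the right-action convention your conjugation identity gives $M_{\Gamma_p(px)^n}=p^{-\mathrm{deg}}\,M_{\Gamma_p(x)^n}\,p^{\mathrm{deg}}$, so right-composing with $p^{-\mathrm{deg}}$ yields $p^{-\mathrm{deg}}\,M_{\Gamma_p(x)^n}$ rather than $M_{\Gamma_p(x)^n}\,p^{\mathrm{deg}}$; to land on the stated form you need to compose with $p^{\mathrm{deg}}$ on the \emph{left} (equivalently, regard the rescaling $\tilde V(x,z)\to\tilde V(px,z)$ as preceding the intertwiner rather than following it), which is how the paper phrases the passage ``the above theorem says that $\Gamma_p(x)^n\cdot p^{\mathrm{deg}}$ is the zero term\dots''.
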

Let us compute the matrix (\ref{automoper}) explicitly in the basis (\ref{cohomBas}). If
$\Psi(x,z)=\Psi_0(z)+\Psi_1(z) x+\dots + \Psi_{n-1} x^{n-1}$, then the convention is that the matrices of the operators act on the row vector
$
(\Psi_0(z),\Psi_1(z),\dots , \Psi_{n-1})
$
by multiplication from the right. For instance, multiplication by $x$ acts by $x\Psi(x,z)=\Psi_0(z) x+\Psi_1(z) x^2+\dots + \Psi_{n-2} x^{n-1}$, and thus the corresponding matrix is
$$
x= \left(\begin{array}{cccccc}
0&1 & 0&0&\dots & 0\\
0&0&1&0& \dots &  0 \\
0&0& 0 &  1& \dots &  0 \\
..&..&..&..& \dots & 1 \\
0&0&0&0& \dots & 0 \\
\end{array}\right).
$$
Let us recall that the logarithm of the Morita gamma function has the following Taylor expansion, see for instance Sections 58 and 61 in  \cite{Sh}:
$$
\log \Gamma_p(x) =\Gamma_p'(0) x - \sum\limits_{m\geq 2} \, \dfrac{\zeta_p(m)}{m}\, x^m
$$
where $\zeta_p(m)$ denote the values of $p$-adic zeta function. It is known that $\zeta_p(m)=0$ for even $m$, so only the odd powers of $x$ appear in the expansion. 
Thus, in  our conventions we obtain that the operator of multiplication by $\log \Gamma_p(x)^n$ is given by the $n\times n$  matrix
$$
\log \Gamma_p(x)^n  = \left(\begin{array}{cccccc}
0&\nu_1 & 0& \nu_3&\dots &  \dots\\
0&0& \nu_1&  0& \nu_3 &  \dots \\
0&0& 0 &  \nu_1& \dots &  \dots \\
..&..&..&..& \dots & 0\\
..&..&..&..& \dots & \nu_1 \\
0&0&0&0& \dots & 0 \\
\end{array}\right),
$$
where we denoted $\nu_1=n \Gamma_p'(0)$ and  $\nu_m = - n\, \zeta_p(m)/m$ for $m> 2$. We also obviously have
$$
p^{\mathrm{deg}}= \left(\begin{array}{cccccc}
1&0 & 0&0&\dots & 0\\
0&p&0&0& \dots &  0 \\
0&0& p^2 &0 & \dots &  0 \\
..&..&..&..& \dots & 0 \\
..&..&..&..& p^{n-2} & 0 \\
0&0&0&0& \dots & p^{n-1} \\
\end{array}\right).
$$
Finally, we obtain that the constant term of the automorphism has the following matrix:
$$
\Fr(0) =  {\Gamma_{p}}( x )^{n}  \cdot p^{\mathrm{deg}} =  \exp\left(\begin{array}{cccccc}
0&\nu_1 & 0& \nu_3&\dots &  \dots\\
0&0& \nu_1&  0& \nu_3 &  \dots \\
0&0& 0 &  \nu_1& \dots &  \dots \\
..&..&..&..& \dots & 0\\
..&..&..&..& \dots & \nu_1 \\
0&0&0&0& \dots & 0 \\
\end{array}\right) \cdot \left(\begin{array}{cccccc}
1&0 & 0&0&\dots & 0\\
0&p&0&0& \dots &  0 \\
0&0& p^2 &0 & \dots &  0 \\
..&..&..&..& \dots & 0 \\
..&..&..&..& p^{n-2} & 0 \\
0&0&0&0& \dots & p^{n-1} \\
\end{array}\right)
$$
and the exponent of a nilpotent operator is, of course, well defined. 
\begin{example}
For the case of the classical Bessel equation, corresponding to $n=2$, this matrix has the form
\bean \label{zertermexam}
\Fr(0) = \left(\begin{array}{ll}
1,& p\, \Gamma'_p(0)\\
0,& p 
\end{array}\right).
\eean
The existence of the Frobenius automorphism for the Bessel equation was discovered by Dwork  \cite{Dw74}. In particular, in Lemma 3.2 of \cite{Dw74} he gave an explicit formula for the matrix $\Fr(0)$ in terms of certain converging $p$-adic series. One checks that Dwork's matrix $\mathfrak{U}(0)$ coincides with (\ref{zertermexam}) after transposition. In particular, his series for the coefficient $\mathfrak{U}_{2,1}(0)=\gamma$ converges to $p \Gamma'_p(0)$. 

The action of the Frobenius automorphism for the generalized Bessel equations ($n>2$) was further studied by Sperber in \cite{Sp80}. The results of this Section are also in full agreement with these results.
\end{example}

We note that the values of $p$-adic zeta function appear naturally in our analysis of hypergeometric equations when we pass from the equivariant cohomology to the non-equivariant cohomology. Namely, the values of $p$-adic zeta functions appear as the coefficients of the expansion of the cohomology class $\Gamma_p(TX^{(p)})$ is some basis of $H^{*}(X)$. We expect the values of  $p$-adic zeta appearing in the study of the mirror symmetry for more general examples of $X$, e.g., \cite{BV23}, should be treated in the same way.

\section{Numerical examples of Theorem \ref{mainthm} \label{numsec}}

\subsection{} 
Let ${\Psi}(h,\aa,q,z)$ be the  fundamental solution matrix (\ref{fundsolsmat}). For simplicity we assume that all parameters are rational.  As in (\ref{frobstru}) we look for the Frobenius intertwiner matrix in the form
\bean \label{veryrel}
\Fr(z)={\Psi}(p h ,p \aa ,q,z) \Lambda {\Psi}(h ,\aa,q^p,z^p)^{-1}.
\eean
where $\Lambda$ is a constant matrix. 
In order to be a Frobenius intertwiner, we need to find  $\Lambda$ for which  (\ref{veryrel}) is an element of $\mathsf{GL}_n(E_p)$.  Lemma \ref{lemmagaussnorm} says that $\Fr(z)$ must be a power series
$$
\Fr(z)= \Fr_0+\Fr_1 z + \dots \in  \mathsf{Mat}_{n}(\matQ) [[z]] 
$$
such that for any natural number $s$, there exists a natural number  $m_s$ for which\footnote{Lemma \ref{lemmagaussnorm} says that $\Fr(z) \pmod{p^s}$ must be a rational function. Denominators of these rational functions are controlled by the singularities of the corresponding differential equations. In our case, the  qde has a singularity at $z=1$ and therefore the denominators must be powers of $(1-z)$. }
\bean \label{rationality}
(1-z)^{m_s} \, \Fr(z) \pmod{p^s} \in \mathsf{Mat}_{n}(\matZ)[z].
\eean
In words: after multiplication by sufficiently large power of $(1-z)$ the matrix elements of $\Fr(z)$ reduced $\pmod{p^s}$ truncate to polynomials in $z$. This condition is very strong and can be used to effectively discover non-trivial Frobenius intertwiners as we are about to show.

\subsection{} 
Note that if the parameters $\aa=(a_1,\dots, a_n)$ are pairwise distinct the matrix $\Lambda$ must be diagonal. Indeed, the fundamental solution matrix ${\Psi}(\aa,\hbar,q,z)$ contains logarithmic terms given by matrix $\mathsf{E}(z)$ in (\ref{analitfund}).   The intertwiner (\ref{veryrel}) is a power series in $z$ only if these log terms are canceled. It is elementary to check that for the pairwise distinct $a_i$ this happens only if $\Lambda$ is diagonal. Since $\Lambda$ is defined up to a scalar factor, we may assume further that $\Lambda_{n,n}=1$ so
\bean \label{consmat}
\Lambda=\left(\begin{array}{ccccc}
c_1 & 0& 0& .. & 0\\
0 & c_2 &0& .. & 0\\
..&..&..&..&..\\
0 & 0& ..& c_{n-1} & 0\\
0&0&0&..&1
\end{array}\right)
\eean 
for some unknown $c_1,\dots, c_{n-1} \in \matZ_p$. 
\subsection{} Let us assume $n=2$. In this case 
\bean \label{lamtwo}
\Lambda = \left(\begin{array}{cc}
c&0\\
0&1
\end{array}\right).
\eean 
Let us pick a prime $p=3$, set
$q=1+p = 4$,
and pick some random values of $a_1,a_2,h\in\matZ_p$, for instance
\bean \label{numval}
h=\dfrac{1}{2}, \ \ a_1=\dfrac{1}{5}, \ \  a_2=0.
\eean
(Since the solutions  only depend on the difference $a_1-a_2$ we can always assume $a_2=0$). For our computer check we use integral approximations of these numbers say up to $p^7$:
$$
\bar{h} = h \pmod{3^7} = 1094, \ \ \bar{a_1} = a_1 \pmod{3^7} = 875,  \ \ \ \bar{a_2}=0.
$$
We now determine the constant $c \in \matZ_p$ in (\ref{lamtwo}) from the rationality condition (\ref{rationality}) in a chain of steps.  For this we assume that $c$ has the following $p$-adic expansion:
$$
c= c^{(0)} + c^{(1)} 3 + c^{(2)} 3^2 + \dots
$$
where $c^{(i)} \in \{0,1,2\}$.

\vspace{3mm}
\underline{Step 1: determination of $c^{(0)}$ and $c^{(1)}.$}
\vspace{3mm}

\noindent Using a computer we approximate the power series ${\Psi}(\aa,\hbar,q,z)$ by series expansion (\ref{fundsolsmat}) up to $O(z^{20})$. 
Substituting this approximation to (\ref{veryrel}) and trying the integers $0,1,2$ for the values of $c^{(0)}$ and $c^{(1)}$  we find that
$
\Fr(z) \pmod{p} 
$
exists (i.e. no powers of $p$ in the denominators) only if $c^{(0)}=1$ and $c^{(1)}=0$, in which case we find that
\bean \label{modp1}
\Fr(z) \pmod{p}  =  \left( \begin {array}{cc} 1+O \left( {z}^{20} \right) &O \left( {z
}^{20} \right) \\ \noalign{\medskip}O \left( {z}^{20} \right) &1+O
 \left( {z}^{20} \right) \end {array} \right)
\eean 
We note that $\Fr(z) \pmod{p}$  given by (\ref{modp1}) does not depend on the value of $c^{(2)}$ and higher. 

\vspace{3mm}

\underline{Step 2: determination of $c^{(2)}.$}

\vspace{3mm}

\noindent
If $c^{(2)}=0$, using a computer we find
$$
\Fr(z) \pmod{p}  =  \left[ \begin {array}{cc} 1+O \left( {z}^{20} \right) &O \left( {z
}^{20} \right) \\ \noalign{\medskip}{z}^{12}+2\,{z}^{15}+O \left( {z
}^{20} \right) &1+O \left( {z}^{20} \right) \end {array} \right] 
$$
If $c^{(2)}=1$ we find
$$
\Fr(z) \pmod{p}  =  \left[ \begin {array}{cc} 1+O \left( {z}^{20} \right) &O \left( {z
}^{20} \right) \\ \noalign{\medskip}O \left( {z}^{20} \right) &1+O
 \left( {z}^{20} \right) \end {array} \right] 
$$
If $c^{(2)}=2$ we find
$$
\Fr(z) \pmod{p}  =  \left[ \begin {array}{cc} 1+O \left( {z}^{20} \right) &O \left( {z
}^{20} \right) \\ \noalign{\medskip}2\,{z}^{12}+{z}^{15}+O \left( {z
}^{20} \right) &1+O \left( {z}^{20} \right) \end {array} \right] 
$$
We see that the only value of $c^{(2)}$ for which the result is a polynomial in $z$ is $c^{(2)}=1$. These results do not depend on the value of the coefficient $c^{(3)}$ and higher  which are to be determined in the next step.

\vspace{3mm}

\underline{Step 3: determination of $c^{(3)}.$}
\vspace{3mm}

\noindent
If $c^{(3)}=0$ we find:
\begin{tiny}
$$
(1-z)^{2} \Fr(z) \!\! \pmod{p^2} = \left[ \begin {array}{cc} 1+z+{z}^{2}+O \left( {z}^{20} \right) &O
 \left( {z}^{20} \right) \\ \noalign{\medskip}6\,z+3\,{z}^{12}+3\,{z
}^{13}+3\,{z}^{14}+6\,{z}^{15}+6\,{z}^{16}+6\,{z}^{17}+O \left( {z}^{
20} \right) &1+7\,z+{z}^{2}+O \left( {z}^{20} \right) \end {array}
 \right] 
$$
\end{tiny}

\noindent
If $c^{(3)}=1$ we find:
\begin{tiny}
$$
(1-z)^{2} \Fr(z) \!\! \pmod{p^2} = \left[ \begin {array}{cc} 1+z+{z}^{2}+O \left( {z}^{20} \right) &O
 \left( {z}^{20} \right) \\ \noalign{\medskip}6\,z+O \left( {z}^{20}
 \right) &1+7\,z+{z}^{2}+O \left( {z}^{20} \right) \end {array}
 \right]
$$
\end{tiny}

\noindent
If $c^{(3)}=2$ we find:
\begin{tiny}
$$
(1-z)^{2} \Fr(z) \!\! \pmod{p^2} =  \left[ \begin {array}{cc} 1+z+{z}^{2}+O \left( {z}^{20} \right) &O
 \left( {z}^{20} \right) \\ \noalign{\medskip}6\,z+6\,{z}^{12}+6\,{z
}^{13}+6\,{z}^{14}+3\,{z}^{15}+3\,{z}^{16}+3\,{z}^{17}+O \left( {z}^{
20} \right) &1+7\,z+{z}^{2}+O \left( {z}^{20} \right) \end {array}
 \right]
$$
\end{tiny}
We conclude that there is unique value of $c^{(3)}=1$ for which the result is a polynomial in $z$. 

\vspace{3mm}

Continuing this process we will find that at $m$-th step there is always {\it unique} value of $c^{(m)}$ 
for which rationality condition (\ref{rationality}) is met. Up to the order $9$ these method gives the following value
\bean \label{experc}
c = 1+3^2+3^3+2\, 3^4+3^5+2 \,3^6+3^7+3^8 +O(3^{9}).
\eean
\subsection{}
Theorem \ref{mainthm} and Corollary \ref{fpcorrol} give the following formula for the constant $c$:
$$
c=\dfrac{\Lambda_{1,1}}{\Lambda_{2,2}}=\dfrac{\Gamma_{p,q}(p(a_1-a_2+h)) \Gamma_{p,q}(p(a_2-a_1))}{\Gamma_{p,q}(p(a_2-a_1+h)) \Gamma_{p,q}(p(a_2-a_1))}
$$
at (\ref{numval}) we have
$$
c=\dfrac{\Gamma_{p,q}(\frac{21}{10}) \Gamma_{p,q}(-\frac{3}{5})}{\Gamma_{p,q}(\frac{9}{10}) \Gamma_{p,q}(\frac{3}{5})}.
$$
For $p=3$ and $q=1+p=4$ we compute:
$$
\dfrac{\Gamma_{p,q}(\frac{21}{10}) \Gamma_{p,q}(-\frac{3}{5})}{\Gamma_{p,q}(\frac{9}{10}) \Gamma_{p,q}(\frac{3}{5})}= 1+3^2+3^3+2\, 3^4+3^5+2 \,3^6+3^7+3^8 +O(3^{9})
$$
in full agreement with the value obtained from the numerical experiment (\ref{experc}). 

The author have performed the same numerical experiments for numerous other rational values of  parameters $a_1,a_2$ and $h$ in $\matZ_p$. In all these cases the results are in full agreement with Theorem \ref{mainthm}.
Same method can be used for ranks $n>2$, in which case we have to determine the numerical values of $n-1$ constants in (\ref{consmat}). Again, we find full agreement with Theorem~\ref{mainthm}.

\section{Concluding remarks and future directions \label{conclsec}}

\subsection{}
We expect that the Frobenius intertwiner can be defined enumeratively as a partition function counting the quasimaps from $\mathbb{P}^1$ to a Nakajima variety $X$.
For this, one can consider the quasimaps
with two different relative conditions at $0\in \mathbb{P}^1$ and $\infty \in \mathbb{P}^1$. The corresponding moduli space of quasimaps is represented by  Fig.\ref{quaspic}, we refer to \cite{Oko17} and also \cite{PSZ} where the pictorial notations for various quasimap moduli spaces were introduced. 

\begin{figure}[H]
\includegraphics[scale=0.3]{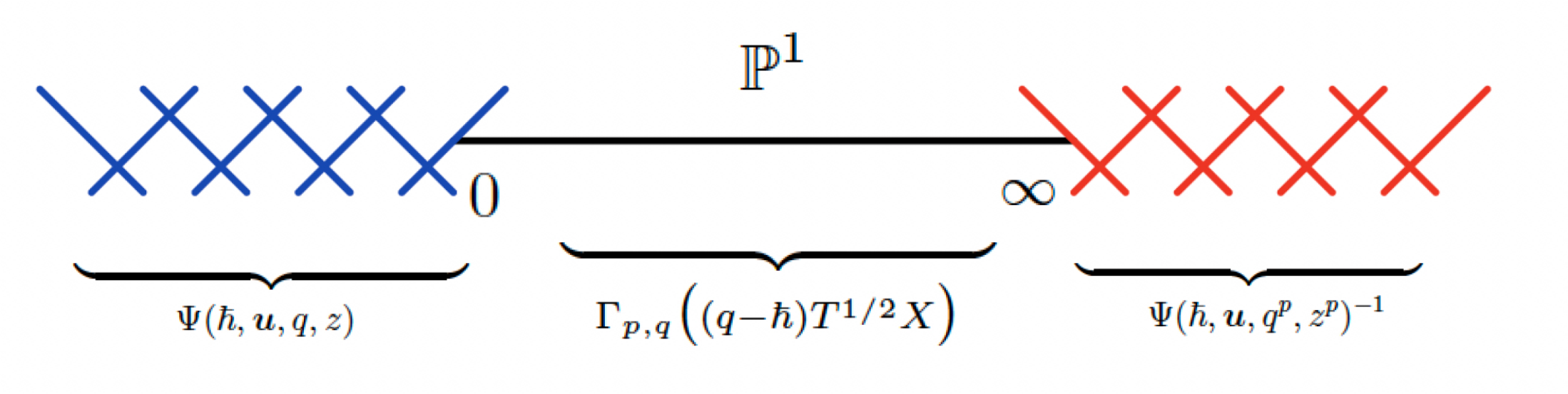}
\caption{Frobenius intertwiner as a partition function of relative quasimaps} \label{quaspic}
\end{figure}

At $0\in \mathbb{P}^1$ we have the usual relative condition on quasimaps as defined in \cite{Oko17}. We recall that the relative boundary condition at $0\in \mathbb{P}^1$ means that $\mathbb{P}^1$ is allowed to deform into a chain of rational ``bubbles'' with simple nodal intersections.  This chain of bubbles is represented by the blue color in Fig.\,\ref{quaspic}.

It is known, Theorem 8.1.16 in \cite{Oko17}, that the partition function of relative quasimaps from the chain of bubbles at $0\in \matP^1$ is a fundamental solution matrix $\Psi(\hbar,\uu,q,z)$ of the quantum difference equation associated with $X$, normalized so that
$$
\Psi(\hbar,\uu,q,z) = 1+ O(z) \in K_{T}(X)^{\otimes 2}[[z]],
$$
in terminology of \cite{Oko17} $\Psi(\uu,\hbar,q,z)$ is also referred to as the {\it capping operator}.

At $\infty \in \mathbb{P}^{1}$ we also impose a relative condition but require, in addition, the relative quasimaps to be $\matZ/p\matZ$-invariant. More precisely, recall that the moduli space of relative quasimaps is defined as a  stack quotient of quasimaps from a chain of $\mathbb{P}^1$'s by the automorphism group $(\matC^{\times})^{l}$ where $l$ is the length of the chain, see Section 6.2 in \cite{Oko17}. The moduli space of relative quasimaps at $\infty \in \mathbb{P}^{1}$ is constructed as follows: first we consider the space of quasimaps from a chain of rational curves of length $l$ which are invariant with respect to $\matZ/p\matZ$, which acts in each ``bubble'' (the component of the chain) via multiplication by $p$ - th roots of unity. Then we define the $\matZ/p\matZ$-invariant quasimaps as the stack quotient of this moduli space by $(\matC^{\times})^{l}$. This ``$\matZ/p\matZ$-invariant chain of bubbles'' is represented by the red color in Fig.\,\ref{quaspic}.

By the $\matZ/p\matZ$-invariance, the degrees of the quasimaps allowed in this picture are all multiples of $p$. In particular, the corresponding partition function is a power series in $z^p$. 
For appropriately defined moduli space, the corresponding partition function arising at $\infty \in \mathbb{P}^1$ is equal to
$\Psi(\uu,\hbar,q^p,z^p)^{-1}$.  The inverse arises due to transposition and inversion of character $q\to 1/q$ at $\infty \in \mathbb{P}^1$.

Finally, the contribution of degree $0$ - quasimaps which correspond to the ``constant'' quasimaps from the parameterized component $\matP^1$ can be computed by $\matC^{\times}_q$ - equivariant localization similarly to how it was done for the shift operators in Section 8.2 of \cite{Oko17}, in particular see Lemma 8.2.12. The resulting contribution has the form
$$
\dfrac{\Phi(p h,p\aa,p x,q,z)}{\Phi(h,\aa,x,q^p,z^p)} \sim \Gamma_{p,q}\Big( (q^p-\hbar^p) T^{1/2}X^{(p)} \Big),
$$
where $T^{1/2}X$ denotes the corresponding choice of the polarization bundle of $X$. The numerator of this fraction is the localization contribution at $0 \in \mathbb{P}^1$ and the denominator is at $\infty \in \mathbb{P}^1$. Combining all this together we obtain that the corresponding partition function is
$$
\Psi(\uu,\hbar,q,z)  \Gamma_{p,q}\Big( (q^p-\hbar^p) T^{1/2}X^{(p)} \Big) \Psi(\uu,\hbar,q^p,z^p)^{-1} 
$$
which is exactly the Frobenius intertwiner (\ref{froint}).

\subsection{} 
The idea of $\matZ/p \matZ$ - invariant  quasimap moduli is similar to the one used in \cite{HL24} to investigate the quantum Steenrod operators in quantum cohomology. It was shown in \cite{HL24}  that for a large class of varieties $X$, which includes the Nakajima varieties, the quantum Steenrod operations coincide with the $p$-curvature of the quantum connection.  

The $q$-difference connections also have well defined $p$-curvature. Namely, the analog of $p$-curvature of $q$-difference connection (\ref{qde}) is:
$$
\mathbf{C}^{(p)}(\zeta,z) =\left.\Big(\Mop(z q^{p-1}) \dots  \Mop(z q)\Mop(z)\Big)\right|_{q=\zeta}
$$
where $\zeta$ is a non-trivial unit root of order $p$. In \cite{KS} we showed that the $p$-curvature of the quantum connection of $X$ arises as the first non-trivial term in the $p$-adic expansion of $\mathbf{C}^{(p)}(\zeta,z)$.

Let us show that $p$-curvature $\mathbf{C}^{(p)}(\zeta,z)$ is similar to $\Mop(z^p)$. Indeed, iterating the $q$-difference equation (\ref{qde})  $p$-times we obtain:
$$
\Psi(q,z q^p)= \Mop(zq^{p-1}) \dots \Mop(z) \Psi(z)
$$
also, rising all variables in (\ref{qde}) to their $p$-th powers we obtain:
$$
\Psi(q^p,z^p q^p) =\Mop(z^p) \Psi(q^p,z^p)
$$
Dividing the first equation by the second, and specializing at $q=\zeta$ we arrive at:
$$
\Fr(\zeta,z) \Mop(z^p) \Fr(\zeta,z)^{-1} = \mathbf{C}^{(p)}(\zeta,z).  
$$
In particular, the spectrum of the $q$-deformed $p$-curvature $\mathbf{C}^{(p)}(\zeta,z)$ coincides with the spectrum of $\psi^p$-twisted quantum multiplication $\Mop(z^p)$. This result is a $q$-version of the isospectrality recently proven in \cite{EV24} 
in the differential case using different methods.

\section*{Acknowledgements}
I thank S.\,Bai, P.\,Etingof, V.\,Golyshev, K.\,Kedlaya, J. Hee Lee, A.\,Oblomkov, A.\,Petrov, D.\,Vargas-Montoya, M.~Vlasenko  and especially A.\,Okounkov for very inspiring discussions.
I thank A.\,Varchenko for introducing me to the subject. This note is the extended version of the talks \cite{Sm23} and \cite{Sm24}, I thank the organizers for these opportunities.  
This work is supported by NSF grants DMS - 2054527 and DMS-2401380.

\appendix

\section{$p$-adic $q$-deformations}
In this section we recall standard notations of $q$-calculus. We also obtain results about $p$-adic $q$-gamma function which we we need in this paper.  
\subsection{$q$-numbers} 
For  $n\in \matN$ we define the $q$-number by:
$$
[n]_q = \dfrac{1-q^n}{1-q} = 1+q+\dots + q^{n-1}.
$$
Let $[n]_q!=[1]_q [2]_q \dots [n]_q$ denote the $q$-factorial and 
\bean \label{qbinom}
\binom{n}{k}_{\!\!q}  = \dfrac{[n]_q!}{[k]_q! [n-k]_q!}
\eean
Let us define the reciprocal of the $q$-gamma functions as formal product:
$$
(x,q)_{\infty} = \prod\limits_{i=0}^{\infty} (1-x q^i)
$$
We also define finite products
$$
(x,q)_{d} = \dfrac{(x,q)_{\infty}}{(x q^d,q)_{\infty}} = (1-x) (1-x q) \dots (1-x q^{d-1})
$$
and 
\bean \label{sqbin}
[x,q]_{d} = (1-q x) \dots (1-q^{d-1} x) = \dfrac{(x q,q)_{\infty}}{(x q^{d},q)_{\infty}}
\eean

\subsection{$q$-hypergeometric series at unit roots} 
Let us recall the $q$-binomial theorem: 
$$
\dfrac{(z \hbar,q)_{\infty}}{(z,q)_{\infty}} = \sum\limits_{d=0}^{\infty} \,\dfrac{(\hbar,q)_d}{(q,q)_d} z^d
$$
At $\hbar=q^{l}$, $l\in \mathbb{N}$ it is convenient to write the coefficients as
\bean \label{poham1}
\dfrac{(\hbar,q)_d}{(q,q)_d} = \dfrac{\frac{(\hbar,q)_{\infty}}{(\hbar q^d,q)_{\infty}}}{\frac{(q ,q)_{\infty}}{(q  q^d,q)_{\infty}}}  = \dfrac{(\hbar,q)_{\infty}}{(q ,q)_{\infty}} \dfrac{(q  q^d,q)_{\infty}}{(\hbar q^d,q)_{\infty}}
\eean 
Using notation (\ref{sqbin}) at $\hbar =q^{l}$ we thus write:
\bean \label{poham2}
\dfrac{(q^{l},q)_d}{(q,q)_d} = \dfrac{[q^{d},q]_{l}}{[1,q]_{l}} 
\eean 
At the same time at $\hbar=q^{l}$ we have
$$
\dfrac{(z \hbar,q)_{\infty}}{(z,q)_{\infty}} = \dfrac{1}{(z,q)_{l}}
$$
Combining all this together, we arrive at the following version of the $q$-binomial theorem:
\bean \label{psexpan}
\dfrac{1}{(z,q)_{l}} = \sum\limits_{d=0}^{\infty}\, \dfrac{[q^{d},q]_{l}}{[1,q]_{l}} z^d 
\eean 
Note that the coefficients of the power series (\ref{psexpan}) are polynomials in $q$ and in particular, can be specialized at unit roots. Let $p$ be a prime and let $\zeta\neq 1$ be a  unit root $\zeta^{p^s}=1$ for some natural number $s$. We may assume that the order of $\zeta$ is $p^m$ with $1\leq m\leq s$. 
Let $h'$ be a natural number. Then, applying (\ref{psexpan}) for $l=h' p^s$ we have
$$
\left.\dfrac{1}{(z,q)_{h' p^s}}\right|_{q=\zeta}=\dfrac{1}{(1-z^{p^m})^{h' p^{s-m}}}  = \sum\limits_{i=0}^{\infty}\, \left.\dfrac{[q^i,q]_{h' p^s}}{[1,q]_{h'p^s}}\right|_{q=\zeta}  z^i 
$$
At the same time, since $\zeta^p$ is a primitive unit roots of order $p^{m-1}$ applying (\ref{psexpan}) for $l=h' p^{s-1}$ we find
$$
\left.\dfrac{1}{(z^p,q^p)_{h' p^{s-1}}}\right|_{q=\zeta}=\dfrac{1}{(1-z^{p^m})^{h' p^{s-m}}}   = \sum\limits_{i=0}^{\infty}\, \left.\dfrac{[q^{pi},q^{p}]_{h' p^{s-1}}}{[1,q^p]_{h'p^{s-1}}}\right|_{q=\zeta}  z^i 
$$
Comparing the coefficients we conclude
\begin{lem} \label{coeffcongr}
Let $\zeta\neq 1$ be a unit $\zeta^{p^s}=1$ and let $h'\in \mathbb{N}$ then for any $i\in \mathbb{N}$ we have
\bean \label{coffs}
\left.\dfrac{[q^{i},q]_{h' p^s}}{[1,q]_{h' p^s}}\right|_{q=\zeta}=\left.\dfrac{[q^{p i},q^p]_{h' p^{s-1}}}{[1,q^p]_{h' p^{s-1}}}\right|_{q=\zeta} 
\eean 
Coefficients (\ref{coffs}) are equal to $0$ unless $p\mid i$.
\end{lem}

\begin{lem} \label{hypergeomlem}

a) If $h \in \matZ$ then the hypergeometric function $F(h,\aa,q,z)$  (\ref{hypereomfun}) does not have poles at $q$ given by roots of unity.

b) Let $k \in \mathbb{Z}$ and $\zeta$ be a primitive unit root of order $m$ then:
$$
\left.F(m k,\aa,q,z)\right|_{q=\zeta} = \dfrac{1}{(1-z^m)^k} 
$$    
\end{lem}
\begin{proof}
The proof follows directly from the definition of $q$-hypergeometric series.     
\end{proof}

\subsection{$p$-adic $q$-deformations \label{pqgamm}} 
 We recall the original definitions of the $p$-adic gamma and $q$-gamma functions:
\begin{thm}[Koblitz, \cite{Ko80}] \label{kobthm}
For $q=1+t$ with $|t|_p<1$ the function defined on $n\in\mathbb{N}$ by
$
\Gamma_{p,q}(n)=(-1)^{n} \prod\limits_{{0<i<n}\atop {p \nmid  i}}\, [i]_q
$
extends to a continuous function on $\matZ_p$.

In the limit $t\to0$ we have 
$\lim\limits_{q\to 1}\, \Gamma_{p,q} = \Gamma_p$ where $\Gamma_p$ denotes the $p$-adic  gamma function of Morita \cite{Mo75}, which is the extension to $\matZ_p$ of 
$
\Gamma_{p}(n)=(-1)^{n} \prod\limits_{{0<i<n}\atop {p \nmid  i}}\, n.
$
\end{thm}

\begin{lem} \label{extlem}
For $q=1+t$ with $|t|_p<1$, the function of $n,h\in \matN$ given by 
\bean \label{qfactors}
f(h,n)=\dfrac{\dfrac{(q^{p h},q)_{\infty}}{(q,q)_{\infty}} 
\dfrac{(q^{pn +1},q)_{\infty}}{(q^{pn + ph},q)_{\infty}}}{\dfrac{(q^{p h},q^p)_{\infty}}{(q^p,q^p)_{\infty}} 
\dfrac{(q^{pn +p},q^p)_{\infty}}{(q^{pn + ph},q^p)_{\infty}}}
\eean
extends to $\matZ_p$ values of the arguments by $f(h,n)= \dfrac{\Gamma_{p,q}(pn +ph)}{\Gamma_{p,q}(pn)\, \Gamma_{p,q}(ph)}$.  
\end{lem}
\begin{proof}
If $n$ and $h$ are two natural numbers then
$$
\dfrac{(q^{p h},q)_{\infty}}{(q,q)_{\infty}} 
\dfrac{(q^{pn +1},q)_{\infty}}{(q^{pn + ph},q)_{\infty}} = \binom{p n+ p h -1}{pn}_{\!\!q} 
$$
is a $q$-binomial coefficient (\ref{qbinom}). Similarly
$$
\dfrac{(q^{p h},q^p)_{\infty}}{(q^p,q^p)_{\infty}} 
\dfrac{(q^{pn +p},q^p)_{\infty}}{(q^{pn + ph},q^p)_{\infty}} = \binom{n+ h -1}{n}_{\!\!q^p}.
$$
For the ratio of these factors we find:
$$
\dfrac{\binom{p n+ p h -1}{pn}_{\!\!q}}{\binom{n+ h -1}{n}_{\!\!q^p}} = \dfrac{\prod\limits_{{0<i<pn + ph} \atop {p\nmid i}}\, [i]_q}{(\prod\limits_{{0<i<pn} \atop {p\nmid i}}\, [i]_q) (\prod\limits_{{0<i<ph} \atop {p\nmid i}}\, [i]_q) } = \dfrac{\Gamma_{p,q}(pn +ph)}{\Gamma_{p,q}(pn)\, \Gamma_{p,q}(ph)}
$$
By Theorem \ref{kobthm} this function extends to $\mathbb{Z}_p$-values of $h$ and $n$.        
\end{proof}

Let us recall that the $p$-adic unit roots $q \in \Omega$ of order $p^s$, are of the form $q=1+t$ with 
$|t|_p = p^{-\frac{1}{p^{s-1}(p-1)}}<1$, \cite{Ko84}.  By Theorem \ref{kobthm} we can consider the values of $p$-adic $q$-gamma functions at such $q$. The following Lemma will be useful in the next section.

\begin{lem} \label{uonecorr} let $x,y \in \mathbb{Z}_p$ and $q \neq 1$ be a  $p^s$-th unit root, then 
\bean  \label{corrid}
\Gamma_{p,q}(x+y p^s) = \Gamma_{p,q}(x) \Gamma_{p,q}(y p^s)
\eean 
\end{lem}
\begin{proof}
First we note that for
$x \in \mathbb{Z}_p$ and $q \neq 1$ given by a  $p^s$-th unit root we have:
\bean \label{gamfact}
\Gamma_{p,q}(x+p^s) = \Gamma_{p,q}(x) \Gamma_{p,q}(p^s)
\eean 
Indeed, for $x \in \mathbb{N}$  the identify  (\ref{gamfact}) follows directly from the definition of $(p,q)$-Gamma function: 
$$
\Gamma_{p,q}(x)=(-1)^{n} \prod\limits_{{0<i<x}\atop {p \nmid  i}}\, [i]_q
$$
which extends to $x \in \mathbb{Z}_p$ as in  Theorem \ref{kobthm}.   Next, for $y \in \mathbb{N}$ (\ref{corrid}) follows by applying (\ref{gamfact}) $y$-times. Then both sides of (\ref{corrid}) extend to  $y \in \mathbb{Z}_p$ as in Theorem \ref{kobthm} again.  
\end{proof}

\subsection{}
We can write
$$
\Gamma_{p,q}(p) = - \dfrac{1-q}{1-q}\dfrac{1-q^2}{1-q}\cdots \dfrac{1-q^{p-1}}{1-q} = - \lim\limits_{w\rightarrow 1}\, \dfrac{1-w}{1-w} \dfrac{1-w q}{1-q} \cdots \dfrac{1-w q^{p-1}}{1-q} 
$$
where the limit is for regularization purposes. 
Now let us assume that $q$ is a primitive $p$-th root of unity, $q^p=1$ then from above formula we obtain:
$$
\Gamma_{p,q}(p) = \dfrac{-1}{(1-q)^{p-1}} \lim\limits_{w\rightarrow 1}\, \dfrac{1-w^p}{1-w} = \dfrac{-p}{(1-q)^{p-1}}
$$
Using same logic we also find for $m\in \matN$:
$$
\Gamma_{p,q}(m p) = \Gamma_{p,q}(p)^m = \Big( \dfrac{-p}{(1-q)^{p-1}} \Big)^m
$$
For primitive $p^s$-th we can use a similar trick. First we write:
$$
\Gamma_{p,q}(p^s)  = - \prod\limits_{{0<i<p^s}\atop {p\nmid i}} \, \dfrac{1-q^i}{1-q} = \dfrac{-1}{(1-q)^{p^{s-1}(p-1)}} \lim\limits_{w\to 1} \, \dfrac{\prod\limits_{{0<i<p^s}}(1-w q^i)}{\prod\limits_{{0<i<p^{s-1}}}(1-w q^{i p})}
$$
Assume $q$ is a primitive $p^s$-th root of unity, $q^{p^s}=1$, then the previous expression gives:
$$
\Gamma_{p,q}(p^s)  =  \dfrac{-1}{(1-q)^{p^{s-1}(p-1)}} \lim\limits_{w\to 1} \dfrac{1-w^{p^s}}{1-w^{p^{s-1}}} = \dfrac{-p}{(1-q)^{p^{s-1}(p-1)}}
$$
In general we conclude
\begin{prop}
Let $q$ be a primitive $p^s$-th unit root and $m\in \matN$ then
$$
\Gamma_{p,q}(m p^s)  = \Big( \dfrac{-p}{(1-q)^{p^{s-1}(p-1)}} \Big)^m  
$$
\end{prop}
Note also that a primitive $p^s$-th unit root has the form
$
q=1+t
$
with $|t|_p = p^{-\frac{1}{p^{s-1}(p-1)}}$. Thus we see that $|\Gamma_{p,q}(p^s)|_p=1$.

\bigskip

\end{document}